\documentclass[12pt]{article}
\usepackage{latexsym,amsmath,amssymb}

\newcommand{\fl}{\longrightarrow}
\newfont{\bb}{msbm10 at 12pt}

\def\r{\hbox{\bb R}}
\def\d{\hbox{\bb D}}
\def\h{\hbox{\bb H}}

\def\s{\hbox{\bb S}}
\def\mr{\mathbb{M}^2\times\mathbb{R}}
\def\hr{\mathbb{H}^2\times\mathbb{R}}
\def\m{\mathbb{M}^2}
\def\pt{\frac{\partial\ }{\partial t}}
\def\pz{\frac{\partial\ }{\partial z}}
\def\pzb{\frac{\partial\ }{\partial \bar{z}}}

\newcommand{\beq}{\begin{equation}}

\newcommand{\eeq}{\end{equation}}

\newcommand{\norm}[1]{\left\Vert #1 \right\Vert}

\newcommand{\set}[1]{\left\{#1\right\}}
\newcommand{\meta}[2]{\langle #1,#2 \rangle }
\newcommand{\eps}{c(\Sigma)}

\newcommand{\To}{\longrightarrow }

\newcommand{\zb}{\bar{z}}

\usepackage[latin1]{inputenc}
\topmargin 0cm \textheight = 40\baselineskip \textwidth 16cm \oddsidemargin 0.3cm
\evensidemargin 0.4cm
\usepackage {amsmath}
\usepackage {amsthm}
\usepackage{times}
\usepackage{amscd}
\usepackage{epsf}

\numberwithin{equation} {section}

\begin{document}

\theoremstyle{plain}\newtheorem{lema}{Lemma}[section]
\theoremstyle{plain}\newtheorem{pro}{Proposition}[section]
\theoremstyle{plain}\newtheorem{teo}{Theorem}[section]
\theoremstyle{plain}\newtheorem{ex}{Example}[section]
\theoremstyle{plain}\newtheorem{remark}{Remark}[section]
\theoremstyle{plain}\newtheorem{corolario}{Corollary}[section]

\begin{center}
\rule{15cm}{1.5pt} \vspace{.6cm}

{\Large \bf Complete Constant Mean Curvature\\[4mm]
surfaces and Bernstein type Theorems in $\mr$} \vspace{0.4cm}

\vspace{0.5cm}

{\large José M. Espinar$\,^\dag$\footnote{The author is partially supported by
MEC-FEDER, Grant No MTM2007-65249}, Harold Rosenberg$\,^\ddag$}\\
\vspace{0.3cm} \rule{15cm}{1.5pt}
\end{center}

\vspace{.5cm}

\noindent $\mbox{}^\dag$ Departamento de Geometría y Topología, Universidad de
Granada, 18071 Granada, Spain; e-mail: jespinar@ugr.es\vspace{0.2cm}

\noindent $\mbox{}^\ddag$ Institut de Mathématiques, Université Paris VII, 2 place
Jussieu, 75005 Paris, France; e-mail: rosen@math.jussieu.fr

\vspace{.3cm}

\begin{abstract}
In this paper we study  constant mean curvature surfaces $\Sigma$ in a product
space, $\mr$, where $\m$ is a complete Riemannian manifold.  We assume the  angle
function $\nu = \meta{N}{\pt}$ does not change sign on $\Sigma$.  We classify these
surfaces according to the infimum  $c(\Sigma)$ of the Gaussian curvature of the
projection of $\Sigma$.

When $H \neq 0$ and  $c(\Sigma) \geq 0$, then  $\Sigma$ is a cylinder over a
complete curve with curvature $2H$.  If $H=0$  and $c(\Sigma) \geq 0$, then $\Sigma$
must be a vertical plane or $\Sigma$ is a slice $\m \times \set{t}$, or $\m \equiv
\r ^2$ with the flat metric and $\Sigma$ is a tilted plane (after possibly passing
to a covering space).

When $c(\Sigma ) <0 $ and $H > \sqrt{-c(\Sigma )}/2$, then $\Sigma$ is a vertical
cylinder over a complete curve of $\m $ of constant geodesic curvature $2H$.  This result is optimal.

We also prove a non-existence result concerning complete multi-graphs in $\mr$, when $c(\m ) <0$.
\end{abstract}

\section{Introduction}

The image of the Gauss map of a complete minimal surface in $\r ^3 $ may determine
the surface. for example, an entire minimal graph is a plane (Bernsteins' Theorem
\cite{Be}). More generally, if the Gaussian image misses more than four points, then
it is a plane (\cite{Fu}). The Gaussian image of Scherks' doubly periodic surface
misses exactly four points.

The image of the Gauss map of a non-zero constant mean curvature surface in $\r ^3 $
does determine the surface under certain circumstances.  Hoffman, Osserman and
Schoen proved (see \cite{HOS}): {\it Let $\Sigma \subset \r ^3$ be a complete
surface of constant mean curvature. If the image of the Gauss map lies in an open
hemisphere, then $\Sigma$ is a plane. If the image is contained in  a closed
hemisphere, then $\Sigma $ is a plane or a right cylinder.}  Unduloids show that
this result is the best possible.

The aim of this paper is establish results analogous to those of
Hoffman-Osserman-Schoen \cite{HOS} for constant mean curvature surfaces in $\mr$. In
product spaces, the condition that the image of the Gauss map is contained in a
hemisphere, becomes that the angle function, i.e., $\nu =\meta{N}{\pt}$, does not
change sign; here  $N$ denotes a unit normal vector field along a surface $\Sigma
\subset \mr$.

There are many (interesting) complete minimal and constant mean curvature graphs in
$\mr$ (e.g., in $\hr$, \cite{CoR}). We will see that conditions on the value of the
mean curvature $H$, and the curvature of $\m$, can determine complete surfaces in
$\mr$ of constant mean curvature $H$ whose angle function does not change sign.

Let $\Sigma$ be a complete surface of constant mean curvature $H$ immersed in $\mr$.
We will also say $\Sigma$ is an $H-$surface to mean $\Sigma$ has constant mean
curvature $H$. Let $\pi : \Sigma \To \m \equiv \m \times \set{0}$, be the horizontal
projection, and define

$$c(\Sigma )={\rm inf}\set{\kappa ( \pi  (p)) \, : \, p \in \Sigma} $$where $\kappa$
is the Gauss (intrinsic) curvature of $\m$. $c (\m)$ is the infimum of the Gauss
curvature of $\m$.

The main result of this work is the following

\vspace{.4cm}

{\bf Theorem \ref{t3}:}

Let $\Sigma $ be a complete immersed $H-$surface in $\mr$, whose angle function
$\nu$ does not change sign. If $c(\Sigma ) <0 $ and $H > \sqrt{-c(\Sigma )}/2$, then
$\Sigma$ is a vertical cylinder over a complete curve of $\m $ of constant geodesic
curvature $2H$.

\vspace{.4cm}

The proof of Theorem \ref{t3} is inspired by the techniques in \cite{HRS}, where it
is proved that a complete multi-graph in $\hr$, of constant mean curvature $1/2$, is
an entire graph.

Before stating our results and describing the organization of this paper, we discuss
some previous work on this subject.

Entire minimal and constant mean curvature graphs $\mr$ have been studied by several
authors.  When $\m$ is a complete surface with non-negative Gaussian curvature, then
an entire minimal graph in $\mr$ is totally geodesic (\cite{Ro1}). Hence the graph
is a horizontal slice or $\m$ is a flat $\r ^2$ and the graph is a tilted plane.
This result has been generalized to constant mean curvature entire graphs in
(\cite{ADR}).

Entire constant mean curvature $1/2$  graphs, in $\hr$ and entire minimal graphs in
Heisenberg space have been classified (they are sister surfaces, see \cite{DH},
\cite{FM2} and \cite{HRS}).

S. Fornari and J. Ripoll (see \cite{FR}) have considered the general problem of
constant mean curvature hypersurfaces transverse to an ambient Killing field of a
Riemannian manifold, and they obtained several interesting generalizations of the
results of Hoffman, Osserman and Schoen \cite{HOS}. In particular, they prove the
Theorem \ref{t1} stated below under the stronger hypothesis $\m$ has non-negative
curvature.

Now we describe the organization of the paper.

In Section 2, we present the equations that an immersed $H-$surface in $\mr $ must
satisfy.

In Sections 3 we consider the case $c(\Sigma )\geq 0$ and we prove:

\vspace{.4cm}

{\bf Theorem \ref{t1}:}

Let $\Sigma \subset \mr$ be a complete $H-$surface whose angle function does not
change sign. If $ c(\Sigma)\geq 0 $, then
\begin{itemize}
\item  If $H=0$, then $\Sigma$ is a vertical plane or $\Sigma$ is
a slice $\m \times \set{t}$, or $\m \equiv \r ^2$ with the flat metric and $\Sigma$
is a tilted plane (after possibly passing to a  covering space).

\item If $H \neq 0$, $\Sigma$ is a cylinder over a complete curve with curvature $2H$.
\end{itemize}

\vspace{.4cm}

In Section 4, we consider the case $c(\Sigma )<0$. First, we prove non-existence of
certain entire graphs; more precisely:

\vspace{.4cm}

{\bf Lemma \ref{l3}:}

Let $\m$ be a complete surface with  $c(\m)< 0$. Then, there are no entire graphs in
$\mr$, with constant mean curvature $H$, with $H> \sqrt{-c(\m)}/2$.

\vspace{.4cm}

Finally, we prove our main Theorem \ref{t3} that we previously stated.

\section{The geometry of surfaces in $\mr$}

Henceforth $\m $ denotes a complete Riemannian surface with $\partial \m =
\emptyset$. Let $g$ be the metric of $\m $ and $\nabla $ the Levi-Civita connection
on $\mr $ with the product metric $\meta{}{}=g+dt^2$.

Let $\Sigma $ be a complete orientable $H-$surface immersed in $\mr$ and let $N$ be
a unit normal to $\Sigma$. In terms of a conformal parameter $z$ of $\Sigma$, the
first and second fundamental forms are given by
\begin{equation}\label{I}
\begin{array}{l}
I\ =\ \lambda\,|dz|^2\\
II\ =\ p\,dz^2+\lambda\,H\,|dz|^2+\overline{p}\,d\bar{z}^2,
\end{array}
\end{equation}
where $p\,dz^2=\meta{ -\nabla _\pz N}{\pz}\,dz^2$ is the Hopf differential of
$\Sigma$.

Let $\pi :\mr\fl\m $ and $h :\mr\fl\r$ be the usual projections. We also call the
restriction of $h$ to $\Sigma$ the {\it height function}.

First we derive the following necessary equations on $\Sigma$, which were obtained
in \cite{AEG}, but we establish here for the sake of completeness.

\begin{lema}\label{l1}
Given an immersed surface $\Sigma \subset \mr $, the following equations are
satisfied:
\begin{eqnarray}
K(I) &=& K + \kappa \, \nu ^2 \label{gauss}\\
|h_z|^2&=&\frac{1}{4}\, \lambda\,(1-\nu^2)\label{c4}\\
h_{zz}&=&\frac{\lambda_z}{\lambda}\,h_z+p\,\nu\label{c5}\\
h_{z\zb}&=&\frac{1}{2}\,\lambda\,H\,\nu\label{c2}\\
\nu_z&=&-H\,h_z-\frac{2}{\lambda}\,p\,h_{\bar{z}}\label{c3}\\
p_{\zb}&=&\frac{\lambda}{2}\,(H_z+\kappa \,\nu\,h_z)\label{c1}
\end{eqnarray}
where $\kappa (z)$ stands for the Gauss curvature of $\m $ at $\pi (z)$, $K$ the
extrinsic curvature and $K(I)$ the Gauss curvature of $I$.
\end{lema}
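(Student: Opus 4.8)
The plan is to derive all six identities by differentiating the two scalar functions $\nu=\meta{N}{\pt}$ and $\hz=\meta{X_z}{\pt}$ (where $X\colon\Sigma\To\mr$ is the immersion and $X_z=\parz X$), together with the Gauss and Codazzi equations, exploiting two features of the product metric: the vertical field $\pt$ is parallel, so $\nabla_V\pt=0$ for every $V$, and the ambient curvature tensor $\bar R$ depends only on the components of its arguments tangent to $\m$. Writing $(\,\cdot\,)^{\top}$ for the component tangent to $\Sigma$, I use the conformality relations $\meta{X_z}{X_z}=0$, $\meta{X_z}{X_{\zb}}=\lambda/2$, the second fundamental form values $\meta{\nabla_{X_z}X_z}{N}=p$ and $\meta{\nabla_{X_z}X_{\zb}}{N}=\lambda H/2$, the conformal connection relations $(\nabla_{X_z}X_z)^{\top}=\frac{\lambda_z}{\lambda}X_z$ and $(\nabla_{X_{\zb}}X_z)^{\top}=0$, and the orthogonal splitting $\pt=T+\nu N$ with $T$ tangent to $\Sigma$ and $|T|^2=1-\nu^2$.

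With these conventions the four first-order identities follow by short computations. For (\ref{c4}) I write $T=aX_z+\bar aX_{\zb}$; pairing with $X_z$ gives $\hz=\frac{\lambda}{2}\bar a$ while $|T|^2=\lambda|a|^2=1-\nu^2$, and eliminating $a$ yields $|\hz|^2=\frac{\lambda}{4}(1-\nu^2)$. For (\ref{c5}) and (\ref{c2}) I differentiate $\hz=\meta{X_z}{\pt}$; since $\pt$ is parallel this gives $h_{zz}=\meta{\nabla_{X_z}X_z}{\pt}$ and $h_{z\zb}=\meta{\nabla_{X_{\zb}}X_z}{\pt}$, and decomposing each ambient derivative into its tangential part (from the conformal relations above) and its normal part (from $p$, respectively $\lambda H/2$) produces $h_{zz}=\frac{\lambda_z}{\lambda}\hz+p\nu$ and $h_{z\zb}=\frac12\lambda H\nu$. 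For (\ref{c3}) I differentiate $\nu=\meta{N}{\pt}$, again using parallelism to reach $\nu_z=\meta{\nabla_{X_z}N}{\pt}$; writing $\nabla_{X_z}N=\alpha X_z+\beta X_{\zb}$ and determining $\alpha,\beta$ from $\meta{\nabla_{X_z}N}{X_z}=-p$ and $\meta{\nabla_{X_z}N}{X_{\zb}}=-\lambda H/2$ gives $\nabla_{X_z}N=-HX_z-\frac{2p}{\lambda}X_{\zb}$, and pairing with $\pt$ produces (\ref{c3}).

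The identity (\ref{gauss}) is the Gauss equation $K(I)=K+\bar K(T_p\Sigma)$, where $\bar K(T_p\Sigma)$ is the sectional curvature in $\mr$ of the tangent plane. Here the product structure does the work: for $v\in T\mr$ write $v^{*}=v-\meta{v}{\pt}\pt$ for its $\m$-component; then for an orthonormal basis $e_1,e_2$ of $T_p\Sigma$ one has $\bar K(T_p\Sigma)=\kappa\big(|e_1^{*}|^2|e_2^{*}|^2-\meta{e_1^{*}}{e_2^{*}}^2\big)$, and the Gram determinant equals $1-\meta{e_1}{\pt}^2-\meta{e_2}{\pt}^2=1-|T|^2=\nu^2$. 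Hence $\bar K(T_p\Sigma)=\kappa\nu^2$ and (\ref{gauss}) follows.

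The main work is the Codazzi identity (\ref{c1}), which I expect to be the only delicate step. I apply the Codazzi--Mainardi equation to the triple $(X_z,X_{\zb},X_z)$. Using $(\nabla_{X_{\zb}}X_z)^{\top}=0$ and the conformal relation for $(\nabla_{X_z}X_z)^{\top}$, the left-hand side, built from the covariant derivatives of $II$, collapses to $\frac{\lambda}{2}H_z-p_{\zb}$, so the whole identity reduces to evaluating the normal component $\meta{\bar R(X_z,X_{\zb})X_z}{N}$. Because $\bar R$ sees only $\m$-components, this equals $\kappa\big(\meta{X_{\zb}^{*}}{X_z^{*}}\meta{X_z^{*}}{N}-\meta{X_z^{*}}{X_z^{*}}\meta{X_{\zb}^{*}}{N}\big)$, and I compute the four pairings $\meta{X_z^{*}}{X_z^{*}}=-\hz^2$, $\meta{X_{\zb}^{*}}{X_z^{*}}=\frac{\lambda}{2}-\hz\hzb$, $\meta{X_z^{*}}{N}=-\nu\hz$, $\meta{X_{\zb}^{*}}{N}=-\nu\hzb$ directly from the relations above. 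The two cubic terms cancel and only $-\frac{\lambda}{2}\kappa\nu\hz$ survives; substituting into the Codazzi relation gives $p_{\zb}=\frac{\lambda}{2}(H_z+\kappa\nu\hz)$. The point demanding care is that the surviving curvature term carry the correct sign, which forces fixing mutually consistent conventions for $\bar R$, for $II$ and for the Codazzi equation; with the signs above everything matches, completing the verification of all six equations.
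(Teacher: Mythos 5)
Your proposal is correct and follows essentially the same route as the paper: decompose $\pt=T+\nu N$, exploit that $\pt$ is parallel, use the conformal structure equations $\nabla_{\pz}\pz=\frac{\lambda_z}{\lambda}\pz+pN$, $\nabla_{\pz}\pzb=\frac{\lambda H}{2}N$, $\nabla_{\pz}N=-H\pz-\frac{2p}{\lambda}\pzb$ paired against $\pt$, and invoke the Gauss and Codazzi equations with the product-space curvature tensor. The only differences are presentational — you verify explicitly (via the Gram determinant and the projection pairings) the facts $\bar{K}(T_p\Sigma)=\kappa\nu^2$ and $\meta{\bar R(\pz,\pzb)\pz}{N}=-\frac{\lambda}{2}\kappa\nu h_z$ that the paper cites implicitly from the structure of the product curvature tensor, and your sign conventions are mutually consistent, so the argument goes through.
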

\begin{proof}
Let us write
$$\pt=T+\nu\,N$$where $T$ is a tangent vector field on $S$. Since $\pt$ is the gradient in $\m$ of
the function $t$, it follows that $T$ is the gradient of $h$ on $S$.

Thus, from (\ref{I}), one gets $T=\frac{2}{\lambda}(h_{\zb}\pz+h_z\pzb)$ and so
$$
1=\langle\pt,\pt\rangle=\langle T,T\rangle+\nu^2=\frac{4\,|h_z|^2}{ \lambda}+\nu^2,
$$
that is, (\ref{c4}) holds.

On the other hand, from (\ref{I}) we have
\begin{equation}\label{dos}
\begin{split}
\nabla_{\pz}\ \pz &= \frac{\lambda _z}{\lambda}\pz+p\,N\\
\nabla_{\pz}\ \pzb &= \frac{1}{2}\,\lambda\,H\,N\\
-\nabla_{\pz}\ N&= H\,\pz+\frac{2}{\lambda}\,p\,\pzb .
\end{split}
\end{equation}

The scalar product of these equalities with $\pt$ gives us (\ref{c5}), (\ref{c2})
and (\ref{c3}), respectively.

Finally, from (\ref{dos}) we get
$$
\langle\nabla_{\pzb}\nabla_{\pz}\ \pz-\nabla_{\pz}\nabla_{\pzb}\ \pz ,N\rangle\ =\
p_{\zb}-\frac{1}{2}\,\lambda\,H_z.
$$
Hence, using the relationship between the curvature tensors of a product manifold
(see, for instance, \cite[p. 210]{O}), the Codazzi equation becomes
$$ \frac{1}{2}\,\lambda\,\kappa \,\nu\,h_z\ =\
p_{\zb}-\frac{1}{2}\,\lambda\,H_z, $$ that is, (\ref{c1}) holds.

To finish, note that (\ref{gauss}) is nothing but the Gauss equation of the
immersion.
\end{proof}

Now, we will define a quadratic differential depending on $c(\Sigma)$. Denote
$$ Q_{c(\Sigma)} \, dz^2 = (2H \, p - c(\Sigma) h_z ^2)\,dz^2 .$$

Note that this is either the usual Hopf differential (up to the factor $2H$) when
$\m = \r ^2$, or the Abresch-Rosenberg differential when $\m = \h ^2$ or $\m = \s
^2$ (see \cite{AR}).

Now, we will compute the modulus of the gradient and Laplacian of $\nu$.

\begin{lema}\label{l2}
Let $\Sigma$ be an $H-$surface immersed in $ \mr $, with $c(\Sigma) \neq 0$. Then
the following equations are satisfied:
\begin{eqnarray}
\norm{\nabla \nu }^2&=& \frac{c(\Sigma)}{4} ( 4H^2 + c(\Sigma) (1-\nu ^2) -2 K )^2 -
\eps\left( K^2 +\frac{4|Q_{c(\Sigma)}|^2}{\lambda ^2}\right) \label{modnuz}\\
\Delta \nu &=& -\left( 4H^2 +\kappa (1-\nu ^2) - 2K\right)\nu \label{deltanu}
\end{eqnarray}
\end{lema}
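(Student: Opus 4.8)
The plan is to prove Lemma~\ref{l2} by direct computation, differentiating the angle function $\nu$ and repeatedly substituting the structural equations of Lemma~\ref{l1}. The two formulas have quite different characters: equation~(\ref{deltanu}) is a second-order identity that should fall out almost immediately, whereas equation~(\ref{modnuz}) is a first-order identity whose right-hand side has been engineered to display the quadratic differential $Q_{c(\Sigma)}$, so the real work lies there.

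First I would establish~(\ref{deltanu}). Starting from~(\ref{c3}), namely $\nu_z=-H\,h_z-\tfrac{2}{\lambda}\,p\,h_{\bar z}$, I would differentiate with respect to $\bar z$ and use the fact that the Laplacian in the conformal metric is $\Delta=\tfrac{4}{\lambda}\,\partial_z\partial_{\bar z}$. Computing $\nu_{z\bar z}$ requires $h_{z\bar z}$ from~(\ref{c2}), $p_{\bar z}$ from the Codazzi equation~(\ref{c1}), together with $h_{zz}$ from~(\ref{c5}) and the conjugate of~(\ref{c4}); since $H$ is constant the term $H_z$ drops. After collecting terms and replacing $|h_z|^2$ by $\tfrac14\lambda(1-\nu^2)$ via~(\ref{c4}), the extrinsic curvature $K=\tfrac{1}{\lambda^2}(\lambda^2H^2-4|p|^2)$ should appear naturally, and the expression ought to collapse to $-(4H^2+\kappa(1-\nu^2)-2K)\nu$. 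The main bookkeeping hazard here is keeping the factors of $\lambda$ and the complex conjugates straight.

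The harder identity is~(\ref{modnuz}). The gradient norm satisfies $\norm{\nabla\nu}^2=\tfrac{4}{\lambda}|\nu_z|^2$, so I would compute $|\nu_z|^2=\nu_z\overline{\nu_z}$ directly from~(\ref{c3}). Expanding gives three types of terms: $H^2|h_z|^2$, the cross terms $H\bar p\, h_z h_{\bar z}\cdot(\text{stuff})$, and $\tfrac{4}{\lambda^2}|p|^2|h_z|^2$. Each can be rewritten using~(\ref{c4}) to eliminate $|h_z|^2$ and $|h_{\bar z}|^2$ in favor of $1-\nu^2$. The goal is to reorganize the result so that the combination $|Q_{c(\Sigma)}|^2=|2Hp-c(\Sigma)h_z^2|^2$ emerges; expanding this square produces $4H^2|p|^2$, $c(\Sigma)^2|h_z|^4$, and cross terms $-2Hc(\Sigma)(p\,\bar h_z^2+\bar p\,h_z^2)$, and I expect these cross terms to match (after substitution) pieces of $\norm{\nabla\nu}^2$. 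The strategy is therefore to treat~(\ref{modnuz}) as an algebraic identity to be verified: substitute everything in terms of $\nu$, $K$, $H$, $c(\Sigma)$, and $|Q_{c(\Sigma)}|^2/\lambda^2$, then confirm both sides agree.

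The step I expect to be the genuine obstacle is recognizing and engineering the appearance of $|Q_{c(\Sigma)}|^2$ in~(\ref{modnuz}); the formula is clearly reverse-engineered from the definition of the quadratic differential, so rather than deriving it blindly I would expand the claimed right-hand side, use the Gauss equation $K=H^2-\tfrac{4|p|^2}{\lambda^2}$ to eliminate $|p|^2$, and check that it reduces to $\tfrac{4}{\lambda}|\nu_z|^2$. A subtle point is that $\kappa$ appears in~(\ref{deltanu}) while $c(\Sigma)$ appears in~(\ref{modnuz}): here I note that $Q_{c(\Sigma)}$ is holomorphic-like only when $\kappa\equiv c(\Sigma)$, but the identity~(\ref{modnuz}) is purely pointwise and algebraic, so it holds regardless, with $c(\Sigma)$ entering only through the definition of $Q_{c(\Sigma)}$. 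Keeping the distinction between the constant $c(\Sigma)$ and the variable $\kappa(z)$ clear throughout the cross-term matching will be the crux of the computation.
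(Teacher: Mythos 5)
Your proposal is correct and follows essentially the same route as the paper's proof: \eqref{deltanu} is obtained by differentiating \eqref{c3} in $\bar{z}$ and substituting \eqref{c2}, \eqref{c5}, \eqref{c1} (with $H_z=0$) and \eqref{c4}, while \eqref{modnuz} comes from expanding $|\nu_z|^2$ via \eqref{c3}, eliminating the cross terms $p\,h_{\bar{z}}^2+\overline{p}\,h_z^2$ through the definition of $|Q_{c(\Sigma)}|^2$, and using \eqref{c4} together with $4|p|^2=\lambda^2(H^2-K)$. One caveat: your (correct) expansion of $|Q_{c(\Sigma)}|^2$ contains $c(\Sigma)^2|h_z|^4$, whereas the paper writes $|h_z|^4$, i.e.\ it implicitly normalizes $|c(\Sigma)|=1$; if you carry the general constant through, \eqref{modnuz} as stated only balances under that normalization, so your remark that the identity ``holds regardless'' of the value of $c(\Sigma)$ needs this qualification.
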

\begin{proof}
From (\ref{c3})
$$
|\nu_z|^2\ =\ \frac{4\,|p|^2\,|h_z|^2}{\lambda^2}+
H^2\,|h_z|^2+\frac{2\,H}{\lambda}(p\,h_{\zb}^2+\overline{p}\,h_z^2),
$$
and taking into account that
$$ |Q _{c(\Sigma)}|^2\ =\
4\,H^2\,|p|^2+|h_z|^4-2\,c(\Sigma) \,H(p\,h_{\zb}^2+\overline{p}\,h_z^2),
$$
we obtain, using also (\ref{c4}), that
\begin{equation*}
\begin{split}
|\nu_z|^2&=\left(\frac{|p|^2}
{\lambda}+\frac{H^2\,\lambda}{4}\right)(1-\nu^2)+\frac{c(\Sigma)}{\lambda}\,
\left(4\,H^2\,|p|^2+ \frac{\lambda^2}{16}(1-\nu^2)^2-|Q _{c(\Sigma)}|^2\right)\\
 &= \frac{\lambda }{4}(2H^2 - K)(1-\nu ^2)+c(\Sigma) \frac{\lambda}{4}
 \left( 4\,H^2(H^2 -K)+ \frac{(1-\nu^2)^2}{4}-\frac{4|Q _{c(\Sigma)}|^2}{\lambda ^2}\right)
\end{split}
\end{equation*}where we have used that $4|p|^2= \lambda ^2 (H^2 -K)$. Thus
\begin{equation*}
\begin{split}
\norm{\nabla \nu}^2 &= \frac{4}{\lambda } |\nu _z|^2 = - c(\Sigma) (4H^2 + c(\Sigma)
(1-\nu ^2))K + 2H^2 (1-\nu ^2) \\
 &+ 4 c(\Sigma) H^4 + c(\Sigma) \frac{(1-\nu^2)^2}{4}-c(\Sigma) \frac{4|Q_{c(\Sigma)}|^2}{\lambda ^2}\\
 &=- c(\Sigma) (4H^2 + c(\Sigma) (1-\nu ^2))K + \frac{c(\Sigma)}{4}(4H^2 +c(\Sigma) (1-\nu ^2))^2-
 c(\Sigma) \frac{4|Q_{c(\Sigma)}|^2}{\lambda ^2}\\
 &=\frac{c(\Sigma)}{4} ( 4H^2 + c(\Sigma) (1-\nu ^2) -2 K )^2 - c(\Sigma)
\left( K^2 +\frac{4|Q _{c(\Sigma)}|^2}{\lambda ^2}\right)
\end{split}
\end{equation*}

On the other hand, by differentiating (\ref{c3}) with respect to $\zb$ and using
(\ref{c5}), (\ref{c2}) and (\ref{c1}), one gets
\begin{equation*}
\nu_{z\zb}\ = -\kappa
\,\nu\,|h_z|^2-\,\frac{2}{\lambda}\,|p|^2\,\nu-\,\frac{H^2}{2}\,\lambda\,\nu.
\end{equation*}

Then, from (\ref{c4}),
\begin{equation*}
\begin{split}
\nu_{z\zb}&=-\frac{\lambda\,\nu}{4}\left(\kappa (1-\nu^2)
+\,\frac{8\,|p|^2}{\lambda^2}+2\,H^2\right)\\
 &= -\frac{\lambda}{4}\left( 4H^2 +\kappa (1-\nu ^2) - 2K\right) \nu
\end{split}
\end{equation*}thus
\begin{equation*}
\begin{split}
\Delta \nu &= \frac{4}{\lambda}\nu _{z\zb} =-\left( 4H^2 +\kappa (1-\nu ^2) -
2K\right)\nu
\end{split}
\end{equation*}
\end{proof}

\begin{remark}
Note that (\ref{deltanu}) is nothing but the Jacobi equation for the Jacobi field
$\nu$.
\end{remark}

We say $\Sigma$ is a {\it vertical plane} when $\Sigma = \gamma \times \r$, $\gamma
\subset \m $ a complete geodesic.

\begin{lema}\label{l2.3}
Let $\Sigma $ be a complete $H-$surface immersed in $\mr$ whose angle function $\nu$
is constant. Then,
\begin{itemize}
\item  If $H=0$, $\Sigma$ is a vertical plane or
a slice $\m \times \set{t}$, or $\m \equiv \r ^2$ with the flat metric and $\Sigma$
is a tilted plane (after possibly passing to a  covering space).

\item If $H \neq 0$ when $c(\Sigma) \geq 0$ or $H >1/2$ when $c(\Sigma) =-1$, then
$\Sigma$ is a cylinder over a complete curve with curvature $2H$ in $\m$.
\end{itemize}
\end{lema}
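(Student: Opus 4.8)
The plan is to start from the hypothesis that $\nu$ is constant and feed this into the structural equations of Lemma \ref{l2}. Since $\nu$ is constant, $\nabla \nu \equiv 0$ and $\Delta \nu \equiv 0$, so both right-hand sides of (\ref{modnuz}) and (\ref{deltanu}) must vanish identically. The case $\nu \equiv 0$ (the surface is everywhere tangent to $\partial_t$) should be separated from the case $\nu$ a nonzero constant, because (\ref{deltanu}) gives very different information in the two regimes.

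First I would treat $\nu \equiv 0$. Geometrically this means $\partial_t$ is always tangent to $\Sigma$, so $\Sigma$ is invariant under vertical translations and is therefore a vertical cylinder $\gamma \times \r$ over a curve $\gamma \subset \m$. A short computation with the second fundamental form shows that the mean curvature $H$ of such a cylinder equals half the geodesic curvature of $\gamma$, forcing $\gamma$ to have constant geodesic curvature $2H$; completeness of $\Sigma$ gives completeness of $\gamma$. When $H=0$ this makes $\gamma$ a complete geodesic, i.e.\ $\Sigma$ is a vertical plane, which accounts for one alternative in the $H=0$ conclusion.

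Next I would handle $\nu$ a nonzero constant. Here (\ref{deltanu}) forces $4H^2 + \kappa(1-\nu^2) - 2K \equiv 0$ along $\Sigma$. I would combine this with the Gauss equation (\ref{gauss}), $K(I) = K + \kappa\nu^2$, to express things in terms of the intrinsic curvature, and simultaneously use the vanishing of (\ref{modnuz}). When $H=0$ and $\nu$ is a nonzero constant, $\partial_t$ makes a fixed angle with $N$; if that angle is $\pm\pi/2$ (equivalently $\nu=\pm1$) then $N$ is vertical and $\Sigma$ is a horizontal slice $\m\times\{t\}$, while an intermediate constant angle should, together with the vanishing curvature expressions, force $\m$ to be flat and $\Sigma$ to be a tilted plane in $\r^2\times\r$ (possibly after passing to a cover). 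For $H\neq 0$ I expect the combination of the two vanishing identities to again pin down the geometry, and I would argue that a nonzero constant $\nu$ is incompatible with the constraints unless the surface degenerates to the cylinder case already described; the sign hypotheses on $c(\Sigma)$ (namely $c(\Sigma)\geq 0$, or $H>1/2$ when $c(\Sigma)=-1$) enter precisely to rule out the intermediate constant-angle possibilities and to guarantee the curve has the right curvature.

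The main obstacle will be the nonzero-constant-$\nu$, $H\neq0$ analysis: showing that the two algebraic constraints coming from $\nabla\nu=0$ and $\Delta\nu=0$ cannot be simultaneously satisfied by a complete surface with a genuinely intermediate constant angle, and that any solution must be a vertical cylinder. This is where the curvature bound on $c(\Sigma)$ must be used carefully — the quantity $4H^2 + c(\Sigma)(1-\nu^2)$ and the sign of $c(\Sigma)$ control whether (\ref{modnuz}) can vanish, and the threshold $H>1/2$ at $c(\Sigma)=-1$ is exactly the borderline where the positive and negative contributions balance. I would isolate this sign analysis as the crux, deriving a contradiction from $0<\nu^2<1$ under the stated hypotheses, and conclude that $\nu\in\{0,\pm1\}$, reducing everything to the cylinder and slice cases handled above.
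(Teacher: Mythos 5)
Your case division ($\nu\equiv 0$, $\nu\equiv\pm1$, $0<|\nu|<1$) is the same as the paper's, and the first two cases are handled correctly; in particular your argument for $\nu\equiv0$ (the unit tangent field $\partial_t$ is complete on $\Sigma$, so $\Sigma$ is a union of vertical lines, hence a vertical cylinder over a complete curve of geodesic curvature $2H$) is sound and in fact more direct than the paper's, which passes through flatness and conformal type. The genuine gap is in the intermediate case $0<|\nu|<1$, which you correctly flag as the crux but never close. From $\Delta\nu=0$ and $\nu\neq0$, equation (\ref{deltanu}) gives $4H^2+\kappa(1-\nu^2)-2K\equiv0$, where $K$ is the \emph{extrinsic} curvature; no sign analysis can start from this identity alone, because the term $-2K$ is uncontrolled (indeed the identity forces $K>0$ under your hypotheses, so the expression can perfectly well vanish algebraically). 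The paper's key step, absent from your sketch, is to use (\ref{c3}) pointwise: $\nu_z=0$ forces $Hh_z=-\frac{2}{\lambda}\,p\,h_{\bar z}$, and since $|h_z|\neq0$ by (\ref{c4}) (as $1-\nu^2>0$), taking moduli gives $H^2=\frac{4|p|^2}{\lambda^2}=H^2-K$, i.e. $K\equiv0$. Only then does the identity collapse to $4H^2+\kappa(1-\nu^2)=0$, which the sign hypotheses kill: for $c(\Sigma)=-1$ and $H>1/2$ one has $4H^2+\kappa(1-\nu^2)>4H^2-1>0$, a contradiction, while for $c(\Sigma)\geq0$ it forces $H=0$ and $\kappa\equiv0$ along $\pi(\Sigma)$.

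Your proposed substitute, combining the vanishing of (\ref{modnuz}) with that of (\ref{deltanu}), cannot replace this step. Formula (\ref{modnuz}) is $\norm{\nabla\nu}^2$ obtained by taking the modulus of (\ref{c3}); its vanishing discards exactly the phase relation that yields $K\equiv0$, and leaves only an identity among $K$, $\kappa$, $\nu$ and $|Q_{c(\Sigma)}|^2$ with no sign obstruction (it is moreover stated only for $c(\Sigma)\neq0$, so it is unavailable when $c(\Sigma)=0$). Quantitatively: the best soft bound is $K\leq H^2$ (from $4|p|^2=\lambda^2(H^2-K)\geq0$), and feeding it into $2K=4H^2+\kappa(1-\nu^2)$ with $\kappa\geq-1$, $0<1-\nu^2<1$ gives only $2H^2<1$; so a sign analysis along your lines rules out the intermediate case for $c(\Sigma)=-1$ only when $H\geq1/\sqrt{2}$, not at the stated threshold $H>1/2$, which genuinely requires $K\equiv0$ via (\ref{c3}). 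A second, smaller omission: in the case $H=0$, $0<|\nu|<1$, $c(\Sigma)\geq0$, one gets $\kappa\equiv0$ only on $\pi(\Sigma)$; to conclude that $\m$ itself is flat (so that $\m\equiv\r^2$ and $\Sigma$ is a tilted plane, up to covers) the paper must prove the extra claim $\pi(\Sigma)=\m$, which it does by a completeness argument (a boundary point of $\pi(\Sigma)$ would force geodesics of $\Sigma$ to become vertical, making $N$ horizontal and contradicting $\nu$ constant and nonzero); your sketch omits this entirely.
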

\begin{proof}

We can assume, up to an isometry, that $\nu \leq 0$. We will divide the proof in
three cases:

\begin{itemize}
\item $\nu = 0$:

Using (\ref{c2}), $h$ is harmonic and $\Sigma$ is flat since $\lambda = 4 |h_z|^2$
by (\ref{c4}), thus $\Sigma$ is conformally the plane. So, in this case, $\Sigma $
must be either a vertical plane if $H=0$ or $\Sigma$ is a vertical cylinder over a
complete curve of curvature $2H$ in $\m$.

\item $\nu = -1$:

In this case, $\Sigma $ is a slice, and necessarily $H=0$.

\item $-1 < \nu < 0$:

From (\ref{c3})
$$H h_z = - \frac{2 p}{\lambda} h_{\zb}$$then
$$ H^2 = \frac{4|p|^2}{\lambda ^2} = H^2 -K $$since $|h_z|^2 \neq 0$ from
(\ref{c4}), so $K=0$ on $\Sigma$.

Thus, from (\ref{deltanu}), we have
$$ 4 H^2 + \kappa (1- \nu ^2 ) = 0 .$$

So, if $c(\Sigma) = -1$, this is impossible since $0<1-\nu ^2 <1$ and $4H^2 > 1 = -
c(\Sigma)$.

If $c(\Sigma) \geq 0$, then $H=0$ and $\kappa (\pi ) \equiv 0 $ on $\Sigma$, and we
will show that

{\bf Claim: }$\pi  (\Sigma) = \m$.

It is enough to prove that $\pi  (\Sigma ) $ has no boundary in $\m$. Suppose that
there exists $q \in \partial \pi  (\Sigma ) \subset \m $ and $\set{p_n} \subset
\Sigma$ a sequence such that $\set{\pi (p_n )} \To q $. Fix $p_0 \in \Sigma$ and let
$\gamma _n $ be the complete geodesic in $\Sigma $ joining $p_0$ and $p_n$. Since $q
\in \partial \pi  (\Sigma)$, $\gamma _n$ must become almost vertical at $p_n$ for
$n$ sufficiently large, which means that $N(p_n)$ must become horizontal, but $\nu $
is a constant different from $0$, a contradiction.

Thus $\pi  (\Sigma ) = \m $ and $\m $ is a complete flat surface since $K(I)=0$ by
the Gauss equation, that is, it is isometrically $\r ^2$ (after possibly passing to
a covering space), and $\Sigma $ is a {\it tilted} plane.
\end{itemize}
\end{proof}

We state some basic facts on the theory of eigenvalue problems on Riemannian
manifolds (see \cite{Ch1} and \cite{Ch2} for details). Given a domain $\Omega
\subset \m $ such that $\overline{\Omega}$ is compact and its boundary $\partial
\Omega $ is $C^{\infty }$, the real numbers $\lambda$, called {\it eigenvalues}, are
those for which there exists a nontrivial solution $\phi \in C^2 (\Omega) \cap C^0
(\overline{\Omega})$ to
\begin{equation}\label{dirichlet}
\begin{matrix}
 \overline{\Delta} \phi + \lambda \phi = 0 & \text{ on } \, \Omega \\
 \phi = 0 & \text{ on } \, \partial \Omega
\end{matrix}
\end{equation}where $\overline{\Delta}$ denotes the usual Laplacian operator
associated to the Riemannian metric $g$ on $\Omega$. This problem is called the {\it
Dirichlet eigenvalue problem}.

It is well known that the set of eigenvalues in the Dirichlet problem consists of a
sequence
$$ 0 < \lambda _1 \leq \lambda _2 \leq \ldots \uparrow +\infty ,$$and we will denote
\begin{equation}
\lambda (\Omega) = \lambda _1
\end{equation}to be the lowest eigenvalue in the Dirichlet eigenvalues problem in
$\Omega$.

The last quantity that we will need it is the {\it Cheeger constant}, that is
\begin{equation}\label{cheeger}
\jmath (\m) = {\inf }_{\Omega} \frac{{\rm A}(\partial \Omega)}{{\rm V}(\Omega)}
\end{equation}where $\Omega$ varies over open domains on $\m$ possesing compact
closure and $C^{\infty}$ boundary.

\section{Complete $H-$surfaces $ \Sigma$ in $\mr$ with $c(\Sigma) \geq 0$}

In this Section we shall establish:

\begin{teo}\label{t1}
Let $\Sigma $ be a complete $H-$surface immersed in $\mr$ whose angle function $\nu$
does not change sign. If $ c(\Sigma)\geq 0 $, then
\begin{itemize}
\item  If $H=0$, then $\Sigma$ is a vertical plane or $\Sigma$ is
a slice $\m \times \set{t}$, or $\m \equiv \r ^2$ with the flat metric and $\Sigma$
is a tilted plane (after possibly passing to a  covering space).

\item If $H \neq 0$, $\Sigma$ is a cylinder over a complete curve with curvature $2H$.
\end{itemize}
\end{teo}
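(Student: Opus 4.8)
The plan is to reduce Theorem~\ref{t1} to the case of \emph{constant} angle function, where Lemma~\ref{l2.3} already gives the full classification. Since $\nu$ does not change sign, we may assume after an isometry that $\nu \leq 0$ on $\Sigma$, and the goal is to prove that $\nu$ is in fact constant. The main analytic tool is the Jacobi equation~(\ref{deltanu}),
$$ \Delta \nu = -\left( 4H^2 + \kappa(1-\nu^2) - 2K\right)\nu, $$
together with the Gauss equation~(\ref{gauss}), which under $c(\Sigma)\geq 0$ forces $K(I) = K + \kappa\nu^2 \geq K$ and $\kappa \geq c(\Sigma) \geq 0$. First I would use the gradient formula~(\ref{modnuz}) from Lemma~\ref{l2} in the case $c(\Sigma) > 0$: the right-hand side is a sum in which the sign of $c(\Sigma)$ is controlled, and this should be leveraged to show that $\nu$ cannot have interior behavior incompatible with being constant. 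When $c(\Sigma) = 0$, Lemma~\ref{l2} does not directly apply, so that sub-case must be handled separately, most likely by observing that~(\ref{deltanu}) becomes $\Delta\nu = -(4H^2 - 2K)\nu$ and exploiting $K(I)\geq K$.

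The key step I expect is a maximum-principle or stability argument applied to $\nu$. Because $\nu\leq 0$ and $\nu$ satisfies a linear elliptic equation $\Delta\nu + q\,\nu = 0$ with $q = 4H^2 + \kappa(1-\nu^2) - 2K$, the function $\nu$ is a nonpositive Jacobi field. The strategy is to show that either $\nu\equiv 0$, or $\nu$ attains an interior maximum value $0$ somewhere and the strong maximum principle forces $\nu\equiv 0$, or else $\nu$ stays strictly negative and one derives that $K\equiv 0$ and $\nu$ is constant. Concretely, I would argue that completeness of $\Sigma$ combined with the sign condition on $\nu$ and the nonnegativity coming from $c(\Sigma)\geq 0$ yields that $\|\nabla\nu\|^2$ must vanish identically. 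One natural route: integrate~(\ref{deltanu}) against a suitable test function, or use the fact that $K(I)\geq 0$ (from the Gauss equation, since $\kappa\nu^2\geq 0$) to invoke a parabolicity/completeness argument showing the bounded subharmonic-type function $\nu$ has no nonconstant behavior.

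Once $\nu$ is shown to be constant, Lemma~\ref{l2.3} closes the argument immediately: if $H=0$ then $\Sigma$ is a vertical plane, a slice $\m\times\{t\}$, or (when $\m$ is flat $\r^2$) a tilted plane after passing to a covering; if $H\neq 0$ then $\Sigma$ is a cylinder over a complete curve of curvature $2H$. This is exactly the dichotomy stated in the theorem, so no further case analysis beyond Lemma~\ref{l2.3} is needed.

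The hard part will be establishing constancy of $\nu$ rigorously on a \emph{complete} (possibly noncompact) surface, where the maximum principle alone is insufficient and one needs a completeness/parabolicity input. The delicate point is controlling the sign of the gradient expression~(\ref{modnuz}): the term $-c(\Sigma)\bigl(K^2 + 4|Q_{c(\Sigma)}|^2/\lambda^2\bigr)$ has an unfavorable sign when $c(\Sigma)>0$, so the argument cannot simply read off $\|\nabla\nu\|^2\leq 0$ pointwise. I expect the resolution to combine the Gauss equation bound $K(I)\geq K$ with an integral or curvature-estimate argument that uses completeness to rule out nonconstant bounded Jacobi fields, reducing everything to Lemma~\ref{l2.3}; the borderline case $c(\Sigma)=0$, where Lemma~\ref{l2} is unavailable, will likely require its own separate treatment via~(\ref{deltanu}) and~(\ref{gauss}) directly.
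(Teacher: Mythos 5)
There is a genuine gap, and it sits exactly where you yourself flag the ``hard part'': your whole strategy is to prove $\nu$ is constant and then quote Lemma \ref{l2.3}, but on a complete surface whose universal cover is conformally the disk no parabolicity is available, and in fact the paper never proves $\nu$ is constant in that case --- it proves the disk case \emph{cannot occur}, by a stability-type theorem that nothing in your outline replaces. The paper's proof runs through uniformization. On the sphere and on the plane your outline is essentially the paper's argument: by \eqref{deltanu}, $\Delta\nu=-\bigl(4H^2+\kappa(1-\nu^2)-2K\bigr)\nu\geq 0$ once one notes $4H^2-2K=2H^2+2(H^2-K)\geq 0$ (because $H^2-K=4|p|^2/\lambda^2\geq 0$) and $\kappa\geq c(\Sigma)\geq 0$; so $\nu\leq 0$ is bounded subharmonic, hence constant by compactness resp.\ parabolicity, and Lemma \ref{l2.3} finishes. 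Note this covers $c(\Sigma)=0$ and $c(\Sigma)>0$ at one stroke, and \eqref{modnuz} is never used, so the sign problem you worry about in its last term is moot and your separate treatment of $c(\Sigma)=0$ is unnecessary. On the disk, however, bounded subharmonic functions are plentiful and nonconstant, so ``$\nu$ must be constant'' is not a conclusion you can reach this way. What the paper does instead is: first, the strong maximum principle gives $\nu<0$ strictly (if $\nu$ vanishes at an interior point then $\nu\equiv 0$, and then \eqref{c2} and \eqref{c4} make $h$ harmonic with $\lambda=4|h_z|^2$, so $\Sigma$ is flat, hence conformally the plane --- a contradiction); second, using the Gauss equation \eqref{gauss} to trade the extrinsic curvature $K$ for the intrinsic one, \eqref{deltanu} is rewritten as
\begin{equation*}
\Delta\nu - 2K(I)\,\nu + \bigl(4H^2 + \kappa(1+\nu^2)\bigr)\nu = 0 ,
\end{equation*}
so $\nu$ is a strictly negative solution of $\Delta g - aK(I)g + Pg=0$ with $a=2\geq 1$ and $P=4H^2+\kappa(1+\nu^2)\geq 0$; by Fischer-Colbrie--Schoen \cite[Corollary 3]{FCS} no such solution exists on a complete disk. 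This is the key missing idea in your proposal.

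Moreover, your one concrete fallback for the noncompact case is wrong: you assert $K(I)\geq 0$ ``from the Gauss equation, since $\kappa\nu^2\geq 0$''. Equation \eqref{gauss} only gives $K(I)=K+\kappa\nu^2\geq K$, and $K$ is the \emph{extrinsic} curvature, which is typically negative (for $H=0$ one has $K=-4|p|^2/\lambda^2\leq 0$); if $K(I)\geq 0$ were known, completeness would force parabolicity and the disk case would vanish trivially, which is precisely what cannot be asserted a priori. Your other suggestion (integrating \eqref{deltanu} against a test function) is not developed into an argument. So the sphere/plane part of your plan is sound and coincides with the paper's, but the hyperbolic conformal type --- the real content of the theorem --- remains unproved in your proposal.
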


\begin{remark}\label{r1}
As we mentioned in the introduction, Theorem \ref{t1} generalizes results in
\cite{ADR} and \cite{FR}. Our proof of Theorem \ref{t1} is inspired by the work of
Hoffman, Osserman and Schoen \cite{HOS}.
\end{remark}

{\it Proof of Theorem \ref{t1}:}

\vspace{.2cm}

Without loss of generality, we can assume that $\Sigma $ is simply-connected and
orientable. Otherwise we  take its universal cover, if $\nu$ is non-positive on the
surface, then it is non-positive on its universal cover. Thus, by the Uniformization
Theorem, we have three possibilities:
\begin{enumerate}
\item[1)] $\Sigma $ is conformally the $2-$sphere:

By (\ref{deltanu}), $\nu$ is a bounded subharmonic function since
$$ 4H^2 +\kappa (1-\nu ^2 ) -2K \geq 2H^2 +2(H^2-K)+ c(\Sigma)(1-\nu ^2)\geq 0 ,$$thus
$\nu$ must be constant since $\Sigma$ is conformally the $2-$sphere. So, from Lemma
\ref{l2.3}, $\Sigma$ is a slice and $\m $ is necessarily compact.

\item[2)] $\Sigma $ is conformally the plane:

By (\ref{deltanu}), $\nu$ is a bounded subharmonic function, then $\nu$ must be
constant ($\Sigma$ is conformally the plane). Thus, again from Lemma \ref{l2.3},
$\Sigma$ is either a vertical cylinder over a curve of curvature $2H$ in $\m$ or
$\Sigma$ is isometrically $\r ^2$ (after possibly passing to a  covering space), and
$\Sigma $ is a {\it tilted} plane.

\item[3)] $\Sigma$ is conformally the disk:

We will show that this case is impossible. Again, by (\ref{deltanu}), $\nu$ is
subharmonic. By the Maximum Principle, if $\nu =0$ at any interior point, then $\nu
$ must vanish identically on $\Sigma$. Thus, using (\ref{c2}), $h$ is harmonic, so
$\Sigma$ is flat since $\lambda = 4 |h_z|^2$ by (\ref{c4}), so $\Sigma$ must be
conformally the plane, which is a contradiction.

Therefore, $-1 \leq \nu < 0$ on $\Sigma $. Now, from (\ref{gauss}) and
(\ref{deltanu}), we have
\begin{equation*}
\begin{split}
\Delta \nu &= -(4H^2 +\kappa (1-\nu ^2 ) -2K)\nu = -(4H^2 +\kappa (1-\nu ^2 )
-2(K(I) - \kappa \nu ^2))\nu\\
 &= +2 K(I) \, \nu - (4H ^2 + \kappa (1 +\nu ^2) )\nu
\end{split}
\end{equation*}thus
\begin{equation}\label{FCS}
\Delta \nu - 2 K(I) \, \nu + (4H ^2 + \kappa (1 +\nu ^2) )\nu =0
\end{equation}so $\nu $ is a strictly negative solution of (\ref{FCS}), but this is
impossible by \cite[Corollary 3 on page 205]{FCS} since in this paper the authors
showed that given a complete metric on the disk $(\d , ds^2)$ there is no positive
(or negative) solution to the equation
$$ \Delta g - a K(I) g + P g =0 \, \text{ on } \, \d ,$$where $\Delta $ is the
Laplacian operator associated to the Riemannian metric $ds ^2$, $K(I)$ the Gauss
curvature of $ds ^2$, $P$ a smooth non-negative function on $\d$ and $a \geq 1$.
\end{enumerate}
\begin{flushright}
$\square $
\end{flushright}

\begin{remark}
Observe that Case 1, that is, when $\Sigma$ is conformally the sphere, could be
obtained in the following (more geometrical) way. If $\Sigma $ is conformally $\s
^2$, then at the highest point of $\Sigma$, one must have $H$ not zero (otherwise
$\Sigma $ is a slice by the Maximum Principle), but then at a lowest point one has
the angle function not constant, but $\nu$ is constant for \eqref{deltanu}, so
$\Sigma$ must be a slice.
\end{remark}

\section{Complete $H-$surfaces $ \Sigma$ with $c(\Sigma) < 0$}

When $c(\Sigma) < 0$, this classification is more complicated to obtain. First, we
will construct a $1-$parameter family of subharmonic functions on $\Sigma$.

\begin{lema}\label{l2}
Let $\Sigma $ be an $H-$surface immersed in $\mr$ whose angle function $\nu$ does
not change sign. Suppose that $c(\Sigma ) =-1$ and $H > 1/2$.

Let us consider the function on $\Sigma$
\begin{equation}\label{funcion}
f _{m}(\nu) = \frac{m}{\sqrt{4\,H^2 -1}}\ {\rm
arcsin}\left(\frac{\nu}{\sqrt{4\,H^2-(1-\nu^2)}}\right) .
\end{equation}

Then for each $m \in \r ^+$, $f_m$ is a subharmonic function on $\Sigma $ such that

$$\frac{m}{\sqrt{4\,H^2-1}}\ {\rm
arcsin}\left(\frac{-1}{2H}\right) \leq f_m (\nu) \leq 0$$on $\Sigma$.
\end{lema}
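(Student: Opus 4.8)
The plan is to turn the whole statement into a one-variable computation in $\nu$, fed by the structural identities \eqref{deltanu} and \eqref{modnuz}. First I would normalize: since $\nu$ does not change sign, applying the ambient isometry $(x,t)\mapsto(x,-t)$ of $\mr$ — which reverses the sign of $\nu$ and leaves the $H$-surface structure unchanged (all quantities entering \eqref{deltanu}, \eqref{modnuz} and \eqref{funcion} depend on $H$ only through $H^2$) — lets me assume $-1\le\nu\le 0$ throughout. Because $H>1/2$, the quantity $A:=4H^2-(1-\nu^2)=4H^2-1+\nu^2$ is strictly positive, so $f_m$ is well defined; indeed $g(\nu):=\nu/\sqrt{A}$ satisfies $g(\nu)^2=\nu^2/(\nu^2+(4H^2-1))<1$, so the arcsine makes sense. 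For the pointwise bounds I would check that $g$ is increasing, $g'(\nu)=(4H^2-1)A^{-3/2}>0$; hence on $[-1,0]$ it rises from $g(-1)=-1/(2H)$ to $g(0)=0$, and composing with the increasing function $\arcsin$ and the positive constant $m/\sqrt{4H^2-1}$ gives exactly the claimed chain of inequalities.

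The heart of the matter is subharmonicity. Writing $f_m=\frac{m}{\sqrt{4H^2-1}}F(\nu)$ with $F=\arcsin\circ\,g$, I would compute
\[
F'(\nu)=\frac{\sqrt{4H^2-1}}{A},\qquad F''(\nu)=\frac{-2\nu\sqrt{4H^2-1}}{A^2},
\]
so that the composition formula $\Delta f_m=\frac{m}{\sqrt{4H^2-1}}\big(F'(\nu)\,\Delta\nu+F''(\nu)\,\norm{\nabla\nu}^2\big)$ collapses to
\[
\Delta f_m=\frac{m}{A^2}\Big(A\,\Delta\nu-2\nu\,\norm{\nabla\nu}^2\Big).
\]
Since $m/A^2>0$ and $-\nu\ge 0$, it suffices to show the bracket is nonnegative after factoring out $-\nu$. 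Here I would substitute $\Delta\nu$ from \eqref{deltanu} (which carries the \emph{pointwise} curvature $\kappa$) and $\norm{\nabla\nu}^2$ from \eqref{modnuz} with $c(\Sigma)=-1$, which reads $\norm{\nabla\nu}^2=-\tfrac{A^2}{4}+AK+\tfrac{4|Q_{c(\Sigma)}|^2}{\lambda^2}$.

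The decisive simplification is that the extrinsic-curvature contributions $\pm 2AK$ cancel, leaving
\[
A\,\Delta\nu-2\nu\,\norm{\nabla\nu}^2=-\nu\Big(A\big(4H^2+\kappa(1-\nu^2)\big)-\tfrac{A^2}{2}+\tfrac{8|Q_{c(\Sigma)}|^2}{\lambda^2}\Big).
\]
I would then invoke $\kappa\ge c(\Sigma)=-1$: since $1-\nu^2\ge 0$ this gives $4H^2+\kappa(1-\nu^2)\ge 4H^2-(1-\nu^2)=A$, hence $A\big(4H^2+\kappa(1-\nu^2)\big)\ge A^2$ and the parenthesis is at least $\tfrac{A^2}{2}+\tfrac{8|Q_{c(\Sigma)}|^2}{\lambda^2}>0$. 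Combined with $-\nu\ge 0$ this yields $\Delta f_m\ge 0$, i.e.\ $f_m$ is subharmonic.

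The main obstacle I anticipate is organizing this last calculation so that the pointwise $\kappa$ and the infimum $c(\Sigma)$ interact correctly. The two input identities mix the two quantities, and one must expand far enough for the $K$-terms to cancel — so that no control on the sign of the extrinsic curvature is needed — before applying $\kappa\ge -1$ at precisely one place. Everything else (the derivatives of $F$, the monotonicity of $g$, and the final nonnegativity bookkeeping) is routine.
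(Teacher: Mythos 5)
Your proposal is correct and follows essentially the same route as the paper: compute $\Delta f_m = f_m'(\nu)\,\Delta\nu + f_m''(\nu)\,\norm{\nabla \nu}^2$, substitute \eqref{deltanu} and \eqref{modnuz} with $c(\Sigma)=-1$, observe that the extrinsic-curvature terms cancel, and then use $\kappa \ge -1$ together with $\nu \le 0$ and $m>0$ to get $\Delta f_m \ge 0$, with the pointwise bounds following from monotonicity of $\nu \mapsto \nu/\sqrt{4H^2-1+\nu^2}$. The only differences are cosmetic: you group the result as a single bracket times $-\nu$ where the paper checks three terms separately, and you make explicit the normalization $\nu\le 0$ (via the isometry $(x,t)\mapsto(x,-t)$) that the paper leaves implicit.
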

\begin{proof}

Let us fix $m_0 \in \r ^+$, and consider
\begin{equation}\label{funcion}
f(\nu) = \frac{m_0}{\sqrt{4\,H^2-1}}\ {\rm
arcsin}\left(\frac{\nu}{\sqrt{4\,H^2-1+\nu^2}}\right) ,
\end{equation}thus
$$ \Delta f(\nu) = f'' (\nu)\norm{\nabla \nu}^2 + f'(\nu) \Delta \nu  $$

On the one hand, we have
\begin{eqnarray*}
f'(\nu) &=&\frac{m_0}{4\,H^2 -\,(1-\nu^2)} \\
f''(\nu) &=& \frac{-2 m_0 \nu }{(4\,H^2 -\,(1-\nu^2))^2}
\end{eqnarray*}and, on the other hand
\begin{eqnarray*}
\norm{\nabla \nu }^2&=& -\frac{1}{4} ( 4H^2 - (1-\nu ^2) -2 K )^2 + \left(
K^2 +\frac{4|Q |^2}{\lambda ^2}\right)\\
\Delta \nu &=& -\left( 4H^2 +\kappa (1-\nu ^2) - 2K\right)\nu
\end{eqnarray*}where we have taken $\eps = -1$ and denoted $Q = Q_{\eps}$.

Therefore,
\begin{eqnarray*}
f''(\nu ) \norm{\nabla \nu}^2 &=& \frac{m_0 \nu}{2} - \frac{2 K m_0 \nu}{4H^2
-(1-\nu^2)} + 4 f''(\nu )\frac{|Q|^2}{\lambda ^2}\\
f'(\nu) \Delta \nu &=& - m_0 \frac{4H^2 + \kappa(1-\nu^2)}{4H^2 -(1-\nu^2)} \nu +
\frac{2 K m_0 \nu}{4H^2 -(1-\nu^2)}
\end{eqnarray*}and so
\begin{equation*}
\begin{split}
\Delta f(\nu) &= \frac{m_0 \nu}{2} - m_0 \nu \frac{4H^2 + \kappa(1-\nu^2)}{4H^2
-(1-\nu^2)}  +4 f''(\nu )\frac{|Q|^2}{\lambda ^2} \\
 &= m_0 \nu \left(1 - \frac{4H^2 + \kappa(1-\nu^2)}{4H^2 -(1-\nu^2)}\right) - \frac{m_0
 \nu}{2} +4 f''(\nu )\frac{|Q|^2}{\lambda ^2} .
\end{split}
\end{equation*}

Now, using that $ -1 = c(\Sigma) \leq \kappa $, we have
$$ 1 - \frac{4H^2 + \kappa(1-\nu^2)}{4H^2 -(1-\nu^2)} \leq 0 $$thus
$$ \Delta f(\nu ) \geq 0 $$since $\nu \leq 0 $ and $m_0$ is positive (then $f''(\nu) \geq 0
$).

Moreover, it is easy to see that
$$\frac{m_0}{\sqrt{4\,H^2-1}}\ {\rm
arcsin}\left(\frac{-1}{\sqrt{4\,H^2}}\right)=f(-1) \leq f(\nu) \leq f(0) = 0 $$and
we proved the Lemma \ref{l2}.
\end{proof}

We will need the following

\begin{lema}\label{l3}
There are no entire $H-$graph in $\mr$ with $H> 1/2$ and $c(\Sigma)=-1$.
\end{lema}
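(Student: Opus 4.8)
The plan is to play two isoperimetric estimates for $\m$ against each other: being an entire graph of constant mean curvature $H>1/2$ will force the Cheeger constant $\jmath(\m)$ to exceed $1$, while the curvature hypothesis $\eps=-1$ will force $\jmath(\m)\leq 1$.

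First I would exploit the graph hypothesis directly on $\m$. Since $\Sigma$ is an entire graph, the projection $\pi:\Sigma\To\m$ is a diffeomorphism, so $\pi(\Sigma)=\m$ and $\eps=c(\m)=-1$; in particular $\kappa\geq -1$ everywhere on $\m$. Writing $\Sigma$ as the graph of $u:\m\To\r$ and setting $W=\sqrt{1+\norm{\nabla u}^2}$ (gradient and divergence taken on $\m$), the constant mean curvature equation is, with a suitable choice of unit normal,
\[ \mathrm{div}\Big(\frac{\nabla u}{W}\Big)=2H. \]
Integrating over a geodesic ball $B_R\subset\m$, using the divergence theorem and the pointwise bound $\norm{\nabla u/W}<1$, gives
\[ 2H\,\mathrm{V}(B_R)=\int_{\partial B_R}\Big\langle\frac{\nabla u}{W},n\Big\rangle\,ds\leq\mathrm{A}(\partial B_R). \]
Hence $\mathrm{A}(\partial B_R)/\mathrm{V}(B_R)\geq 2H$ for every $R>0$, and taking the infimum over domains we obtain $\jmath(\m)\geq 2H>1$.

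Next I would bound $\jmath(\m)$ from above using $\kappa\geq -1$. In geodesic polar coordinates the area element $f(r,\theta)$ solves $f_{rr}+\kappa f=0$ with $f(0,\theta)=0$, $f_r(0,\theta)=1$, so Rauch (equivalently Bishop--Gromov) comparison with the model of curvature $-1$ yields $f(r,\theta)\leq\sinh r$ and therefore $\mathrm{V}(B_R)\leq 2\pi(\cosh R-1)$. If one had $\jmath(\m)>1$, there would be $\varepsilon>0$ with $\mathrm{A}(\partial B_R)/\mathrm{V}(B_R)\geq 1+\varepsilon$ for all $R$; since $\tfrac{d}{dR}\mathrm{V}(B_R)=\mathrm{A}(\partial B_R)$ for a.e. $R$, this differential inequality forces $\mathrm{V}(B_R)\geq\mathrm{V}(B_{R_0})\,e^{(1+\varepsilon)(R-R_0)}$, which contradicts $\mathrm{V}(B_R)\leq 2\pi\cosh R$ for $R$ large. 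Thus $\jmath(\m)\leq 1$, contradicting $\jmath(\m)\geq 2H>1$ and proving the lemma.

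The step I expect to be the main obstacle is the passage from the curvature lower bound to $\jmath(\m)\leq 1$: one must be careful that a \emph{lower} bound $\kappa\geq -1$ gives an \emph{upper} bound on the area of geodesic balls (smaller Jacobi fields, hence slower spreading of geodesics), and that geodesic balls remain legitimate competitors in the Cheeger infimum even in the presence of a cut locus. Getting the comparison direction right, and making the elementary exponential-growth argument rigorous for almost every $R$, is where the care is needed; by contrast, the flux computation of the first step is routine once the graph equation and the bound $\norm{\nabla u/W}<1$ are in hand.
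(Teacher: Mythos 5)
Your proposal is correct in substance, and its first half is exactly the paper's argument: the paper likewise integrates the graph equation $\overline{{\rm div}}\bigl(\overline{\nabla}u/\sqrt{1+|\overline{\nabla}u|^2}\bigr)=2H$ over a compact domain (Salavessa's flux argument) and bounds the flux by the boundary area to get $2H\leq\jmath(\m)$. Where you genuinely diverge is the upper bound. The paper gets $\jmath(\m)\leq 1$ spectrally: Cheng's eigenvalue comparison gives $\lambda(B(p,\delta))\leq\lambda_{-1}(\delta)$, hence $\lambda(\m)\leq 1/4$ as $\delta\to\infty$, and then Cheeger's inequality $\jmath(\m)^2\leq 4\lambda(\m)$ closes the loop. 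You instead use Sturm/Bishop--Gromov comparison under $\kappa\geq-1$ to get ${\rm V}(B_R)\leq 2\pi(\cosh R-1)$ and run an elementary ODE argument on ${\rm V}'={\rm A}$ a.e. Your route is more elementary and self-contained, replacing two cited theorems with a Jacobi-field comparison plus calculus; the paper's route outsources the technical work to standard references and is phrased at the level of $\jmath(\m)$ and $\lambda(\m)$, which is the form reused verbatim in Corollary \ref{cor1} (your argument adapts there as well, since the flux step only needs $H\geq\inf H$).

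One piece of friction in your write-up comes from routing both halves through the Cheeger constant, which is actually unnecessary. You derive the ratio bound ${\rm A}(\partial B_R)/{\rm V}(B_R)\geq 2H$ only for geodesic balls and then assert $\jmath(\m)\geq 2H$ (harmless, since the flux computation applies to arbitrary smooth compact domains, as in the paper); conversely, you use $\jmath(\m)>1$ to bound the ratio of balls from below, even though balls --- whose boundaries can be non-smooth at the cut locus --- are not literally admissible competitors in the infimum \eqref{cheeger}. Both issues disappear if you drop $\jmath(\m)$ altogether: the flux bound ${\rm A}(\partial B_R)\geq 2H\,{\rm V}(B_R)$ for a.e.\ $R$, combined with ${\rm V}'={\rm A}$ a.e.\ and $2H>1$, forces ${\rm V}(B_R)\geq {\rm V}(B_{R_0})\,e^{2H(R-R_0)}$, which directly contradicts ${\rm V}(B_R)\leq 2\pi(\cosh R-1)$ for large $R$. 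In that streamlined form your argument is complete and strictly more self-contained than the paper's.
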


\begin{proof}
Let us suppose that such an entire graph exists. Let
$$\Sigma = {\rm Gr}(u)= \set{(x , u(x))\in \mr : \, x \in  \m} ,$$where $u : \m \To \r
$ is a solution of
\begin{equation}\label{divergencia}
\overline{{\rm div }}\left( \frac{\overline{\nabla} u}{\sqrt{1+|\overline{\nabla} u
|^2}}\right)=2H
\end{equation}where $\overline{{\rm div}}$ and $\overline{\nabla}$ denote the divergence and gradient
operators in $\m$.

We will obtain a lower bound for the Cheeger constant of $\m $ in terms of $H$,
following an argument due to Salavessa \cite{Sa}. Let $\Omega \subset \m $ be an
open domain with compact closure and smooth boundary $\partial \Omega$, let us
denote by $\eta$ the outwards normal to $\partial \Omega$. Thus, from
(\ref{divergencia}) and the Divergence Theorem, we have
\begin{equation*}
\begin{split}
2H {\rm V}(\Omega) &= \int _{\Omega} \overline{{\rm div }}\left( \frac{\overline{
\nabla } u}{\sqrt{1+|\overline{\nabla u }|^2}}\right) \, dV = \int _{\partial
\Omega} g\left(\frac{\overline{\nabla } u}{\sqrt{1+|\overline{\nabla} u |^2}} , \eta \right) \, dA \\
 & \leq {\rm A}(\partial \Omega) .
\end{split}
\end{equation*}

Then, an immediate consequence of (\ref{cheeger}) yields
\begin{equation}\label{lowest}
2H \leq \jmath (\m)
\end{equation}

Next, we obtain an upper bound for the infimum, $\lambda (\m)$, of the spectrum of
the Laplacian on $\m$. That is, from \cite{Chg}, for any $p \in \m $ and any $\delta
>0 $ we have
\begin{equation}\label{funtone}
\lambda  (B(p,\delta)) \leq \lambda _{-1} (\delta)
\end{equation}where $\lambda  (B(p,\delta))$ is the lowest eigenvalue
of the Laplacian on the metric ball of radius $\delta$ centered at $p_0$ on $\m$ and
$\lambda _{-1} (\delta)$ the lowest eigenvalue of the Laplacian on the ball of
radius $\delta$ on the space form of constant curvature $-1$.

Now, since $\m $ is complete, letting $\delta \To + \infty$ in \eqref{funtone}, we
have
\begin{equation}\label{bound}
\lambda (\m ) \leq 1 /4
\end{equation}where we have used that
$ \lim _{\delta \To +\infty} \lambda _{-1} (\delta)  = 1/4$ (see \cite[Theorem 5,
pag 46]{Ch1}).

Finally, Cheeger's inequality (see \cite[Theorem VI.1.2, pag 161]{Ch2})
$$ \jmath (\m)^2 \leq 4 \lambda (\m)  $$combined with (\ref{bound}) give us
\begin{equation}\label{upper}
\jmath (\m) \leq 1 = \sqrt{-c(\m )} .
\end{equation}

Thus, from (\ref{lowest}) and (\ref{upper}),
\begin{equation*}
1 < 2H \leq \jmath (\m) \leq 1
\end{equation*}which is a contradiction and we have proved the Lemma \ref{l3}.
\end{proof}

In fact, Lemma \ref{l3} can be generalized as follows.

\begin{corolario}\label{cor1}
Let $\m$ be a surface with  $c(\m)=-1 $. Then, there is no complete entire vertical
graph in $\mr$ with $${\rm inf} \, H > 1/2 .$$
\end{corolario}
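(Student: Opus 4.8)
The plan is to reproduce the argument of Lemma \ref{l3} almost verbatim, the only change being to replace the constant $H$ by the infimum ${\rm inf}\, H$ wherever $H$ enters the lower bound for the Cheeger constant. First I would suppose, for a contradiction, that a complete entire vertical graph $\Sigma = {\rm Gr}(u)$ exists, where now $u : \m \To \r$ solves the prescribed mean curvature equation
$$\overline{{\rm div}}\left(\frac{\overline{\nabla} u}{\sqrt{1+|\overline{\nabla} u|^2}}\right) = 2H$$
with $H = H(x)$ a possibly non-constant function satisfying $H(x) \geq {\rm inf}\, H > 1/2$ throughout $\m$.

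Next, for any open domain $\Omega \subset \m$ with compact closure and smooth boundary, I would integrate this equation over $\Omega$ and apply the Divergence Theorem exactly as in Lemma \ref{l3}. The left-hand side is now $2\int_\Omega H \, dV$, which I bound below using $H \geq {\rm inf}\, H$:
$$2\,({\rm inf}\, H)\,{\rm V}(\Omega) \leq 2\int_\Omega H \, dV = \int_{\partial\Omega} g\!\left(\frac{\overline{\nabla} u}{\sqrt{1+|\overline{\nabla} u|^2}},\,\eta\right) dA \leq {\rm A}(\partial\Omega).$$
Taking the infimum over all such $\Omega$ in the definition (\ref{cheeger}) of the Cheeger constant then yields $2\,{\rm inf}\, H \leq \jmath(\m)$, the analogue of (\ref{lowest}).

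Finally I would invoke the upper bound on $\jmath(\m)$. This half of the proof of Lemma \ref{l3} depends only on the intrinsic geometry of $\m$ and never uses that $H$ is constant, so it carries over without change: from $c(\m) = -1$, Cheng's eigenvalue comparison gives $\lambda(\m) \leq 1/4$, and Cheeger's inequality $\jmath(\m)^2 \leq 4\lambda(\m)$ then forces $\jmath(\m) \leq 1 = \sqrt{-c(\m)}$, which is (\ref{upper}). Combining the two bounds gives
$$1 < 2\,{\rm inf}\, H \leq \jmath(\m) \leq 1,$$
the desired contradiction. I do not expect a genuine obstacle here: the entire content is the observation that the Salavessa-type lower bound for $\jmath(\m)$ requires only a pointwise lower bound on $H$, so substituting ${\rm inf}\, H > 1/2$ for the constant $H$ suffices, while the upper bound $\jmath(\m) \leq 1$ is left untouched.
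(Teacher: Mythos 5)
Your proposal is correct and is essentially identical to the paper's own proof: the paper likewise integrates the prescribed mean curvature equation over $\Omega$, bounds the left-hand side below by $2\,({\rm inf}\,H)\,{\rm V}(\Omega)$ to obtain $2\,{\rm inf}\,H \leq \jmath(\m)$, and then reuses the upper bound $\jmath(\m)\leq 1$ from Lemma \ref{l3} without change to reach the same contradiction.
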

\begin{proof}
With the notation of Lemma \ref{l3}, the only change is
\begin{equation*}
\begin{split}
2 \, {\inf } \,H  \, {\rm V}(\Omega) &\leq \int _{\Omega} H \, dV =\int _{\Omega}
\overline{{\rm div }}\left( \frac{\overline{\nabla } u}{\sqrt{1+|\overline{\nabla } u |^2}}\right) \, dV  \\
 & = \int _{\partial \Omega} g\left(\frac{\overline{\nabla } u}{\sqrt{1+|\overline{\nabla } u |^2}} , \eta \right)
 \, dA\leq {\rm A}(\partial \Omega)
\end{split}
\end{equation*}so,
\begin{equation}\label{lowest2}
2 \, {\inf } \,H \leq \jmath (\m)
\end{equation}and we conclude as in the previous result.
\end{proof}

Now, we establish our main result.

\begin{teo}\label{t3}
Let $\Sigma $ be a complete immersed $H-$surface in $\mr$, whose angle function
$\nu$ does not change sign. If $c(\Sigma ) <0 $ and $H > \sqrt{-c(\Sigma )}/2$, then
$\Sigma$ is a vertical cylinder over a complete curve of $\m $ of constant geodesic
curvature $2H$.
\end{teo}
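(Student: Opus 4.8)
The plan is to reduce first to the normalized situation $c(\Sigma)=-1$ and $H>1/2$. Replacing the metric $g$ on $\m$ by the homothetic metric $-c(\Sigma)\,g$ divides the Gauss curvature of $\m$ by $-c(\Sigma)$ and the mean curvature by $\sqrt{-c(\Sigma)}$, so after this rescaling one has $c(\Sigma)=-1$ and the hypothesis $H>\sqrt{-c(\Sigma)}/2$ becomes $H>1/2$; the desired conclusion (a vertical cylinder over a curve of geodesic curvature $2H$) is preserved under such a homothety. As in the proof of Theorem \ref{t1}, I would then pass to the universal cover, which changes neither completeness, nor the sign of $\nu$, nor the value of $c(\Sigma)=\mathrm{inf}\,\kappa(\pi(p))$, and assume after an isometry that $\Sigma$ is simply connected, orientable and $\nu\leq 0$. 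By the Uniformization Theorem $\Sigma$ is conformally $\s^2$, $\c$ or $\d$.

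The two ``good'' conformal types are handled by the bounded subharmonic functions $f_m(\nu)$ constructed above. In the spherical and in the parabolic (plane) cases every subharmonic function bounded from above is constant, so $f_m(\nu)$ is constant; since $f_m$ is strictly monotone in $\nu$, the angle function $\nu$ is constant, and Lemma \ref{l2.3} (in the range $c(\Sigma)=-1$, $H>1/2$) forces $\Sigma$ to be a vertical cylinder over a complete curve of geodesic curvature $2H$, which is exactly what we want. The spherical case is in fact vacuous, because a closed immersed surface has top and bottom points where $\nu=+1$ and $\nu=-1$, contradicting $\nu\leq 0$. It therefore remains to rule out the case in which $\Sigma$ is conformally the disk $\d$.

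So suppose $\Sigma$ is conformally $\d$. If $\nu$ vanished at an interior point, then $f_m(\nu)$ would attain its maximum value $0$ there, so by the maximum principle $f_m(\nu)\equiv 0$ and hence $\nu\equiv 0$; by Lemma \ref{l2.3} this makes $\Sigma$ a cylinder, which is conformally $\c$ and not $\d$, a contradiction. Hence $\nu<0$ strictly, the horizontal projection $\pi\colon\Sigma\To\m$ is a local diffeomorphism, and $\Sigma$ is a complete multigraph over $\m$. The crux is to upgrade this to a global statement, namely that $\pi$ is a covering map. Writing the induced metric as $I=\pi^{*}g+dh^{2}$ one computes $\nu^{2}\,I\leq\pi^{*}g\leq I$, so $\pi$ is distance--decreasing and the only obstruction to lifting a finite--length curve of $\m$ is the degeneration $\nu\to 0$ along a divergent lift; I would use the bounded subharmonic barrier $f_m(\nu)$ together with completeness of $\Sigma$ to prevent such degeneration and carry out the path lifting. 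Once $\pi$ is known to be a covering we get $\pi(\Sigma)=\m$, whence $c(\m)=c(\Sigma)=-1$, and (since $\Sigma$ is simply connected) $\Sigma$ is an entire $H$--graph over the universal cover $\widetilde{\m}$, which carries the same curvature and so satisfies $c(\widetilde{\m})=-1$. This contradicts Lemma \ref{l3}, so the disk case cannot occur and the proof is complete.

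I expect the genuinely hard step to be precisely this last one: showing that the complete multigraph $\pi\colon\Sigma\To\m$ is a covering map when $\nu$ is allowed to tend to $0$ at infinity. The inequality $\nu^{2}I\leq\pi^{*}g$ localizes the entire difficulty at the set where $\nu\to 0$, since a finite--length curve of $\m$ can only fail to lift there, so the whole problem becomes one of quantitative control of the angle function along divergent curves. The one--parameter family $f_m(\nu)$, whose lower bound $\frac{m}{\sqrt{4H^{2}-1}}\,\mathrm{arcsin}(-1/2H)$ degrades as $m\to\infty$, appears tailored to supply exactly the barrier needed to establish this properness and lifting, and making that step rigorous is where I would concentrate the work.
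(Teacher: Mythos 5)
Your reduction to $c(\Sigma)=-1$, $H>1/2$, your treatment of the sphere and plane cases via constancy of the bounded subharmonic functions $f_m(\nu)$ together with Lemma \ref{l2.3}, the strict negativity of $\nu$ in the disk case, and the final appeal to Lemma \ref{l3} all coincide with the paper's own argument (Claim A, and the opening lines of Claim B). The gap is exactly the step you flag at the end: promoting the complete multigraph to an entire graph. This is not a technical point to be filled in later; it is the core of the theorem, and it occupies essentially the entire proof in the paper (Claims 1--5 of Claim B, adapted from \cite{HRS}). A proof proposal that leaves this step as a hope has omitted the theorem's actual content.

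Moreover, the mechanism you suggest for filling it cannot work. Boundedness of the subharmonic functions $f_m(\nu)$ on a disk-type surface carries no Liouville rigidity (bounded non-constant subharmonic functions on $\d$ abound, since subharmonicity is conformally invariant in dimension two), so their existence gives no lower bound on $|\nu|$ along divergent paths and cannot ``prevent the degeneration $\nu \To 0$.'' Worse, in the configuration that must be excluded --- $\Sigma$ a local graph of $f$ over a maximal geodesic ball $B_R$ with $R<\infty$ --- the paper proves that this degeneration genuinely occurs: the tangent planes become vertical at $\partial B_R$ (Claim 2) and $f \To \pm\infty$ there (Claim 3). So no argument that forbids $\nu \To 0$ outright can succeed; the contradiction must be extracted from the global structure of the limit. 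That is what the paper does: since $\nu$ is a nowhere-vanishing Jacobi field, $\Sigma$ is stable and has bounded geometry, so translated pieces $F(p_n)$ converge to a piece of vertical cylinder $C_{\delta}(q)\times[-\delta,\delta]$ over an arc of geodesic curvature $2H$ (Claim 1); the graph is then analytically continued inside the tubular neighborhood of $C\times\r$ to produce a surface $M$ asymptotic to the lifted cylinder $\widetilde{C}\times\r$ (Claim 4 and the continuation construction); that cylinder is unstable because $4H^2+\kappa \geq 4H^2-1>0$ (Claim 5); and finally a compact unstable variation $K(t)$, whose mean curvature exceeds $H$, is slid along the cylinder until it first touches $M$, contradicting the maximum principle. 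None of these ingredients appears in your proposal. A further inaccuracy in the same step: what must be proved is that $\Sigma$ is an entire graph over $\m$ itself --- surjectivity of $\pi$ is part of the conclusion, not a consequence of ``knowing'' $\pi$ is a covering --- and a merely distance-decreasing local diffeomorphism from a complete surface need not be a covering map, precisely because of the possible $\nu \To 0$ degeneration you identify.
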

\begin{proof}

We divide the proof in two steps. First, we will prove that either the surface is a
cylinder over a complete curve or is a multi-graph. Second, we will prove that such
a multi-graph can not exist.

\vspace{.3cm}

\textbf{Claim A:} $\Sigma$ is either a cylinder over a complete curve with curvature
$2H$, or a multi-graph conformally equivalent to the disk.

\vspace{.1cm}

\emph{Proof of Claim A:} By passing to the universal covering space, we can assume
$\Sigma $ is simply connected. Thus,
\begin{enumerate}
\item[1)] Suppose $\Sigma $ is conformally a $2-$sphere and $H  > 1/2$; we will see that this case is
impossible.

From Lemma \ref{l2}, $f_{m}(\nu) $ is a subharmonic and bounded function on $\Sigma$
which is conformally the sphere, thus it must be constant, and therefore, $\nu$ must
be constant on $\Sigma$. Thus, by Lemma \ref{l2.3}, this is impossible.

\item[2)] Suppose $\Sigma $ is conformally the plane and $H  > 1/2$; we will see that
$\Sigma$ must be a vertical cylinder.

From Lemma \ref{l2}, $f_{m}(\nu) $ is a subharmonic and bounded function on $\Sigma$
which is conformally the plane, thus it must be constant, and therefore, $\nu$ must
be constant on $\Sigma$. Thus, by Lemma \ref{l2.3}, $\Sigma$ is a vertical cylinder
over a curve of curvature $2H$ in $\m$.

\item[3)] Next suppose $\Sigma$ is conformally the disk. We will show that
in this case $\Sigma$ is a multi-graph. Moreover, if $H  > 1/\sqrt{3}$; we will show
that this case is impossible.

First, we will show that

{\bf Claim:} $\nu $ must be negative on $\Sigma$.

By (\ref{funcion}), $f \equiv f_{m_0}$, for $m_0 >0$, is subharmonic. Thus, by the
Maximum Principle, if $\nu = 0$ at any interior point, then $\nu$ must vanish
identically on $\Sigma$. This means that $\Sigma $ must be flat, so conformally the
plane, which is a contradiction. Thus, $\nu < 0$ on $\Sigma$ and the Claim is
proved.

Therefore, $-1 \leq \nu < 0$ on $\Sigma $. Now, from (\ref{gauss}) and
(\ref{deltanu}), we have
\begin{equation*}
\begin{split}
\Delta \nu &= -(4H^2 +\kappa (1-\nu ^2 ) -2K)\nu = -(3H^2 +\kappa +H^2 -K  -K(I))\nu\\
 &= +K(I) \, \nu - ( (3H ^2 + \kappa )  +(H^2 -K) )\nu
\end{split}
\end{equation*}thus
\begin{equation}\label{FCS2}
\Delta \nu -  K(I) \, \nu + P \nu =0 .
\end{equation}

Hence $\nu $ is a strictly negative solution of (\ref{FCS2}), and
$$ P = (3H ^2 + \kappa )  +(H^2 -K) \geq 0 .$$

By the assumption that $H> 1/\sqrt{3}$, this is impossible by \cite[Corollary 3 on
page 205]{FCS}.
\end{enumerate}

So, Claim A is proved.
\begin{flushright}
$\blacksquare $
\end{flushright}

Now, we continue with the second step:

\vspace{.3cm}

\textbf{Claim B:} $\Sigma $ can not be a multi-graph.

\vspace{.1cm}

\emph{Proof of Claim B:} We know that $\Sigma$ can not be an entire graph by Lemma
\ref{l3}. Thus the proof will be completed when we prove that such a multi-graph is
in fact an entire graph. The proof of this will be rather long. The idea originates
in the paper \cite{HRS}, where it is proved that a complete multi-graph in $\hr $,
with $H = 1/2$, is in fact an entire graph.

Let us remark that there is a simple geometrical argument to see that there are no
entire vertical graphs with CMC $H>1/2$ in $\hr$, and this fact is as follows: one
could touch such an entire graph by a compact rotational $H-$sphere (touch on the
mean convex side of the graph), and the Maximum Principle would say that $\Sigma$ is
equal to the sphere, a contradiction.

Now, we will show that $\Sigma$ is an entire graph, assuming $\pt $ is transverse to
$\Sigma$:

Since $H > 1/2$, the mean curvature vector of $\Sigma$ never vanishes, so $\Sigma$
is orientable. Let $N$ denote a unit normal field to $\Sigma$. Since $\nu$ is a
non-zero Jacobi function on $\Sigma$ (see Remark 2.1), $\Sigma$ is strongly stable
and thus has bounded curvature. We assume $\nu <0$ and
$\meta{N}{\overrightarrow{H}}>0$.

As $\Sigma$ has bounded geometry, there exists $\delta > 0$ such that for each $p
\in \Sigma$, $\Sigma$ is a graph in exponential coordinates over the disk
$D_{\delta} \subset T_p \Sigma$ of radius $\delta$, centered at the origin of $T_p
\Sigma$. This graph, denoted by $G(p)$, has bounded geometry. $\delta$ is
independant of $p$ and the bound on the geometry of $G(p)$ is uniform as well.

We denote by $F(p)$ the surface $G(p)$ translated to height zero $\m \equiv \m
\times \set{0}$, i.e, let $\phi _p$ be the isometry of $\mr$ which takes $p $ to
$\pi (p)$, then $F(p) = \phi _p ( G(p) )$.

For $q \in \mr$, we will denote by $C_{\delta} (q)$ an arc of $\m $ with geodesic
curvature $2H$, of length $2 \delta$ and centered at $q$, i.e, $q \in C_{\delta}(q)
$ is the mid-point.

\vspace{.3cm}

{\bf Claim 1:} Let $\set{p_n} \in \Sigma $, satisfy $\nu (p_n) \To 0 $ as $n \To +
\infty$, that is, $T_{p_n} \Sigma$ are becoming vertical. Let $\pi (p_n)=q_n$, and
assume $q_n$ converges to some point $q$.  Then, there is a subsequence of $\set{
p_n }$ (which we also denote by $\set{p_n}$) such that $F(p_n)$ converges to
$C_{\delta} (q)\times [-\delta , \delta]$, for some $2H$ arc $C_{\delta}(q)$ at $q$.
The convergence is in the $C^2 -$topology.

{\it Proof of Claim 1:} The proof is the same as in \cite[Claim 1]{HRS} replacing
horocycles by arcs of curvature $2H$.

\vspace{.3cm}

Now, let $p \in \Sigma$ and assume $\Sigma$ in a neighborhood of $p$ is a vertical
graph of a function $f$ defined on $B_R$,  $B_R$ the open geodesic ball of radius
$R$ of $\m $ centered at $\pi(p) = O \in \m$. Denote by $S(R)$ the graph of $f$ over
$B_R$. If $\Sigma$ is not an entire graph then we let $R$ be the largest such $R$ so
that $f$ exists. Since $\Sigma$ has constant mean curvature, $f$ has bounded
gradient on relatively compact subsets of $B_R$.

Let $q \in \partial B_R $ be such that $f$ does not extend to any neighborhood of
$q$ to an $H> 1/2 $ graph.

\vspace{.3cm}

{\bf Claim 2:} For any sequence $q_n \in B_R$, converging to $q$, the tangent planes
$T_{p_n}S(R)$, where $p_n = (q_n , f(q_n))$, converge to a vertical plane $P$. $P$
is tangent to $\partial B_R$ at $q$ (after   translation of $T_{p_n}S(R)$ to height
zero in $\mr$).

{\it Proof of Claim 2:} The same proof as in \cite[Claim 2]{HRS}.

\vspace{.3cm}

Now, from Claim 1 and Claim 2, we know that for any sequence $q_n \in B_R$
converging to $q$, the $F(q_n)$ converge to $C_{\delta} (q) \times [-\delta ,
\delta]$.

\vspace{.3cm}

{\bf Claim 3:} For any $q_n \to q$, $q_n \in B_R$, we have $f(q_n) \To + \infty $ or
$f(q_n) \To - \infty$.

{\it Proof of Claim 3:} Let $\gamma$ be a compact horizontal geodesic of length
$\varepsilon $ starting at $q$, entering $B_R$ at $q$, and orthogonal to $\partial
B_R$ at $q$. Let $\Gamma $ be the graph of $f$ over $\gamma$. Notice that $\Gamma $
has no horizontal tangents at points near $q$ since the tangent planes of $S(R)$ are
converging to $P$. So assume $f$ is increasing along $\gamma $ as one converges to
$q$. If $f$ were bounded, then $\Gamma $ would have a finite limit point $(q, c)$
and $\Gamma$ would have finite length up till $(q,c)$. Since $\Sigma $ is complete,
$(q,c)\in \Sigma$. But then $\Sigma$ would have a vertical tangent plane at $(q,c)$,
a contradiction. This proves Claim 3.

\vspace{.3cm}

Now choose $q_n \in \gamma $, $q_n \To q$, and $F(p_n)$ converges to $C_{\delta}(q)
\times [-\delta , \delta]$. Let $C$ be the complete curve of $\m$ with $q \in C$ and
geodesic curvature $2H$, such that $C$ contains $C_{\delta}(q)$, and parametrize $C$
by arc length; denote $q(s) \in C$ the point at distance $s$ on $C$ from $q(0)=q$,
$s \in \r$. Note that $C$ may have self-intersections, and may be compact and
smooth. Denote by $\gamma (s)$ a horizontal geodesic arc orthogonal to $C$ at
$q(s)$, $q(s)$ is the mid-point of $\gamma (s)$. Assume the length of each $\gamma
(s)$ is $2 \varepsilon$ and
$$\bigcup _{s \in \r } \gamma (s) = T_{\varepsilon } (C)$$is the $\varepsilon -$tubular
neighborhood of $C$.

Let $\gamma ^+ (s)$ be the part of $\gamma (s)$ on the mean concave side of $C$; so
$\gamma = \gamma ^+(0)$. More precisely, the mean curvature vector of $\Sigma $
points down in $\mr $, and $f \to + \infty$ as one approaches $a$ along $\gamma $,
so $C$ is concave towards $B_R$; i.e., $B_R$ is on the concave side of
$C_{\delta}(q)$ at $q$.

\vspace{.3cm}

{\bf Claim 4:} For $n$ large, each $F(q_n)$ is disjoint from $C \times \r$. Also,
for $|s| \leq \delta$, $F(q_n) \cap \gamma (s)$ is a vertical graph over an interval
of $\gamma  (s)$.

{\it Proof of Claim 4:} Choose $n_0$ so that for $n \geq n_0$, $\Gamma _n (s) =
F(q_n) \cap (\gamma (s) \times \r)$ is one connected curve of transverse
intersection, for each $s \in [-\delta , \delta]$. Since the $F(q_n)$ are $C^2
-$close to $C_{\delta}(q) \times [-\delta , \delta]$, $\Gamma _n (s)$ has no
horizontal or vertical tangents and is a graph over an interval in $\gamma (s)$.

We now show that this interval is in $\gamma ^+(s) -q(s)$. Suppose not, so $\Gamma
_n (s)$ goes beyond $C \times \r $ on the convex side. Recall that $\Gamma = \gamma
\cap P^{\bot}$ is the graph of $f$ and $f \To + \infty$ as one goes up on $\Gamma$.
We have $p_n = (q_n , f(q_n))$. Fix $n \geq n_0$ and choose new points $q_k$, $k
\geq n$, so that $f(q_{k+1})- f(q_k) = \delta $; clearly $q_k \To q $ as $k \To +
\infty $. Lift each $\Gamma _k (s)$ to $G(p_k)$ by the vertical translation of
$\Gamma _k (s)$ by $f(q_k)$. The curve $\Gamma (s) = \bigcup _{k \geq n} \Gamma _k
(s)$ is a vertical graph over an interval in $\gamma (s)$. It has points in the
convex side of $C \times \r$ for some $s_0 \in [-\delta , \delta ]$. For $s =0$,
$\Gamma (0) = \Gamma $ stays on the concave side of $C \times \r$. So, for some
$s_1$, $0 < s_1 \leq s_0 $, $\Gamma (s_1)$ has a point on $C \times \r $ and also
inside the convex side of $C \times \r $.

But the $F(q_k)$ converge uniformly to $\Gamma _{\delta }(q) \times [-\delta ,
\delta]$ as $k \To + \infty$, so the curve $\Gamma (s_1)$ converges to $q(s_1)
\times \r $ as the height goes to $+\infty$. This obliges $\Gamma (s_1)$ to have a
vertical tangent on the convex side of $C \times \r$, a contradiction. This proves
Claim 4.

\vspace{.3cm}

Now we choose $\varepsilon _1 < \varepsilon $ (which we call $\varepsilon$ as well)
so that $\bigcup _{s \in [-\delta , \delta]} \Gamma (s)$ is a vertical graph of a
function $g$ on $\bigcup _{s \in [-\delta , \delta ]}(\gamma ^+ (s) - q(s))$, (the
$\gamma ^+ (s)$ have length $\varepsilon _1$).

Before we continue, note that until here, the proof of Theorem \ref{t3} is the same
proof as in \cite[Theorem 1.2]{HRS} with slight modifications. Now the proof
continues differently.

The graph of $g$ converges to $C_{\delta }(q) \times \r$ as the height goes to
infinity.

Now we begin this process again, replacing $\Gamma$ by the curve $\Gamma (\delta)$.
This analytically continues the graph $g$ to a graph over $ \bigcup _{s \in [-
\delta , 2 \delta]} (\gamma ^+ (s) - q(s))$ which converges uniformly to $C(q , [-
\delta , 2 \delta]) \times \r $ as the height goes to infinity. Here $C(q, [-\delta,
2 \delta])$ denotes the arc of $C$, of length $3 \delta$, between the points
$q(-\delta )$ and $q(2 \delta)$. We now continue analytically, by extending the
graph about $C(2 \delta)$. When we refer here to analytic continuation, we mean the
unique continuation of the local pieces of the surface.

We want to extend so that the surface we obtain is within the $\varepsilon -$tubular
neighborhood, $T_{\varepsilon}(C \times \r) \equiv T_{\varepsilon} (C) \times \r $,
of $C \times \r$. Note that the graph of $g$ on $ \bigcup _{s \in [-\delta , \delta
]}(\gamma ^+ (s) - q(s)) $ has this property.

To do this, we go up high enough on $\Gamma (\delta)$ (and $\Gamma (-\delta)$), to
height $t_1$ say, so that all the curves $\Gamma (s)$ starting at height $t_1 $, for
$s \in [\delta , 2 \delta ] \cup [-2 \delta , -\delta ]$, are $\varepsilon -$close
to $C \times \r $.

Now continue this process replacing $\Gamma (\delta)$ and $\Gamma (-\delta)$ by
$\Gamma (2\delta)$ and $\Gamma (-2 \delta)$; again, going up high enough on these
curves so that the graph, possibly immersed if $C$ is not embedded, is within
$T_{\varepsilon}(C \times \r )$.

Let $M$ denote the surface obtained by this analytic continuation, $M$ is a union of
curves $\Gamma (s)$, starting at different heights, each $\Gamma (s)$ is a graph
over $\gamma ^+ (s)$, converging uniformly to $q(s)\times R ^+$.

Let $T^+ _{\varepsilon }(\widetilde{C}\times \r)$ be the universal covering space of
$$ \bigcup _{s \in \r} (\gamma ^+ (s) \times \r) ,$$and we recall that
each $\gamma ^+ (s)$ is the geodesic of length $\varepsilon $ starting at $q(s)$,
orthogonal to $C $, and going to the side of $C$ where $M$ was constructed.

Now, $\widetilde{C}$ is diffeomorphic to $\r$ and $\widetilde{C} \To C $ is the
immersion of $C$ in $\m $. $T ^+ _{\varepsilon }(\widetilde{C}\times \r)$ is an
$\varepsilon -$(one-sided) tubular neighborhood of $\widetilde{C} \times \r$. We
give $T^+ _{\varepsilon }(\widetilde{C}\times \r)$ the metric induced by that of
$\mr $. We lift $M$ to an $H-$surface $\widetilde{M} \subset T^+ _{\varepsilon
}(\widetilde{C}\times \r)$, $\widetilde{M}$ is asymptotic to $\widetilde{C} \times
\r$ as the height goes to infinity. Let $\beta = \partial \widetilde{M}$.

\vspace{.3cm}

{\bf Claim 5:} The surface
$$ Q = \widetilde{C} \times \r $$is an unstable $H-$surface.

{\it Proof of Claim 5:} The stability operator $J$ of $Q$ is
$$ \Delta + |A|^2 + {\rm Ric}(\overrightarrow{n}) ,$$where $\Delta $ is the Laplacian
operator associated to the Riemannian metric induced on $Q$, $| A |^2$ is the square
of the norm of the shape operator associated to $Q$ and ${\rm
Ric}(\overrightarrow{n})$ is the Ricci curvature in the direction of the unit normal
vector field $\overrightarrow{n}$  along $Q$. Since $Q$ is part of a vertical
cylinder, the extrinsic curvature vanishes identically on $Q$ and the unit normal
$\overrightarrow{n}$ is horizontal, hence
$$ J \equiv \Delta + a ,$$where $a = 4H^2 + \kappa \geq 4 H^2 - 1 >0 $.

Consider the operator $J$ on $[0, L] \times [-r , r]$ for $r>0$, where $[0, L]$ is
an interval  of length $L$ on $ \widetilde{C}$.

It is well known that
$$\varphi _1 (t) = \cos \left(\frac{\pi t}{L^2}\right)$$is a first eigenfunction of
$\Delta $ on $[0,L]$, with eigenvalue $\lambda _1 = \frac{\pi ^2}{L^2}$. Similarly,
a first eigenfunction $\varphi _2$ of $\Delta$ on $[-r, r]$ is
$$ \varphi _2 (t) = \cos\left(  \frac{\pi t}{2 r}\right) $$with eigenvalue $\lambda _2 = \frac{\pi ^2}{4
r^2}$.

Let $\varphi = \varphi _1 \times \varphi _2$, so that
$$ \Delta \varphi + (\lambda _1 + \lambda _2) \varphi = 0 \text{ , on } [0,L]\times [-r , r] ,$$then,
$$ J \varphi + (\lambda _1 + \lambda _2 - a ) \varphi = 0 \text{ , on } [0,L]\times [-r , r]
.$$

Hence, if $r$ and $L$ satisfy
$$ \lambda _1 + \lambda _2 - a < 0 , $$then the domain is unstable.

This condition is
\begin{equation}\label{cond}
\frac{\pi ^2}{L^2} + \frac{\pi ^2}{ 4 r^2 } < 4H^2 +\kappa  ,
\end{equation}but for $L$ and $r$ large enough (note that we identify $\widetilde{C}$
with $\r $ and we can choose $L$ large), it is clear that
$$ \frac{\pi ^2}{L^2} + \frac{\pi ^2}{ 4 r^2 } < 4H^2 -1 ,$$ so condition \eqref{cond} is
fulfilled. And the Claim 5 is proved.

Start with a compact stable domain $K_0$ of $Q$. Let $K_0$ expand until one reaches
a stable-unstable domain $K$ of $Q$, $K$ compact. This means there is a smooth
function $f: K \To \r$, $f=0$ on $\partial K$, $f>0 $ on ${\rm int} K$, and $f$
satisfies
$$ J f + \lambda f = 0 , \, \lambda <0 .$$

Let $K(t)$ be the variation of $K$ given by
$$ K(t) = {\rm exp}_p (p + t f(p)N(p)) ,$$where $p\in K$ and $N(p)$ is a unit normal
to $K$. $K(t)$ is a smooth surface with $\partial K(t) = \partial K \subset Q$, and
for $t$ small, $t\neq 0$, ${\rm int}K(t)\cap Q = \emptyset$.

Since the linearized operator $J$ is the first variation of the mean curvature at
$t=0$, and $Jf (p)= - \lambda f(p) >0 $ for $p \in {\rm int}K$, we conclude
$H(K(t))> H$ for $t>0$, and $H(K(t))< H$ for $t<0$.

Now on any compact set of $Q$, $\beta $ is a positive distance from $Q$. So for $t$
small enough the surfaces $K(t)$ are disjoint from $\beta$ and they can be slid up
and down $Q$ to remain disjoint from $\beta$. But $M$ is asymptotic to $Q$ so for
small $t>0$, the surface $K(t)$ will touch $M$ at a first point, when $K(t)$ is slid
up or down $Q$. But this contradicts the Maximum Principle: If $M(1)$ is on the mean
convex side of $M(2)$ near $p$, then the mean curvature of $M(1)$ at $p$ is greater
than or equal to the mean curvature of  $M(2)$ at p. So, Claim A is proved.
\begin{flushright}
$\blacksquare $
\end{flushright}

This completes the proof of Theorem \ref{t3}.
\end{proof}

\begin{remark}
In \cite{Ro}, the second author proved the following result

\vspace{.3cm}

{\bf Theorem A:} Let $N^3$ be a complete riemannian manifold, and suppose, $H$ and
$C>0$ are constants satisfying
$$ 3 H^2 + S \geq C , $$where $S$ is the scalar curvature function of $N^3 $.
Then, if $\Sigma $ is a complete stable $H-$surface, one has
$$ d_{\Sigma} \left( p , \partial \Sigma \right) \leq \frac{2 \pi}{ \sqrt{3C}} .$$

When $\partial \Sigma = \emptyset$, $\Sigma $ is topologically the sphere $\s ^2$.

\vspace{.3cm}

The proof of Theorem A involves studying the metric $d \tilde{s} ^2 = u ds ^2 $,
where $ds ^2$ is the induced metric on $\Sigma$, and $u$ is a positive solution of
the linearized operator
$$ L = \Delta + |A|^2 + {\rm Ric}(n), $$where $\Delta $ is the Laplacian operator
associated to $ds^2$, $|A|^2$ is the square of the norm of the second fundamental
form, and ${\rm Ric}$ is the Ricci curvature in the direction of the normal vector
field, $n$, along $\Sigma$. This metric $d \tilde{s} ^2$ has positive curvature, so when $\partial
\Sigma = \emptyset$, $\Sigma $ is a compact sphere.

This Theorem A gives some insight into Theorems \ref{t1} and \ref{t3} (Theorem
\ref{t1} when $H>0$ and  Claim A of Theorem \ref{t3} when $H > 1/ \sqrt{3}$) since
in the case $3H^2 + S \geq C > 0$, $\Sigma $ is a multi-graph implies $\Sigma $ is a
sphere. This is ruled out by looking at a highest and a lowest point of $\Sigma$.
\end{remark}

\end{document}